\newtheorem{lemma}{Lemma}
\newtheorem{remark}{Remark}
\newtheorem{prop}{Proposition}
\newtheorem{theorem}{Theorem}
\title{Lagrange's theorem for a family of finite flat group schemes over local Artin rings}
\author{Emiliano Torti}
\date{Laboratoire GAATI, Universit\'e de la Polyn\'esie Fran\c{c}aise\\[2ex]
torti.emiliano@gmail.com\\
\today
}
\begin{document}

\maketitle
\begin{abstract}\noindent
Let $R$ be a local Artin ring with residue field $k$ of positive characteristic. We prove that every finite flat group scheme over $R$ whose special fiber belongs to a certain explicit family of non-commutative $k$-group schemes is killed by its order. This is achieved via a classification result which rely on the explicit study of the infinitesimal deformation theory for such non-commutative $k$-group schemes. The main result answers positively in a new case a question of Grothendieck in SGA 3 on whether all finite flat group schemes are killed by their order. 
\end{abstract}

\section{Introduction}
Let $S$ be a locally Noetherian base scheme and let $G$ be an $S$-scheme. Denote by $\mathcal{O}_S$ and $\mathcal{O}_G$ their respective structure sheaves. The $S$-scheme $G$ is finite and flat if and only if $\mathcal{O}_G$ is a locally free $\mathcal{O}_S$-module of finite rank. A standard EGA reduction argument (see section 8.9,  8.10 and especially section 9.2 in EGA IV Tome 3 in \cite{EGA}), allows us to restrict locally and assume that $S=\text{Spec} (R)$ for some local Noetherian ring $R$ and $G=\text{Spec} (A)$ where $A$ is a finite and free $R$-module of finite rank, say the positive integer $n_R$. By functoriality, the positive integer $n_R$ is the restriction to $\text{Spec}(R)$ of a locally constant function say $n$ defined on $S$ with positive integer values. Such function is called the order of the finite flat $S$-scheme $G$. \\
Assume now that $G=\text{Spec}(A)$ over $\text{Spec}(R)$ has the extra structure of an $R$-group scheme. Note that in notation and terminology in this article we will say $R$-group scheme instead of $\text{Spec}(R)$-group scheme. The category of affine $R$-group schemes is anti-equivalent to the category of commutative Hopf $R$-algebras. This implies that $A$ has a natural structure of Hopf algebra over $R$. Let now $n$ be the order of $G=\text{Spec}(A)$ over $R$, i.e. the rank of $A$ as a finite and free $R$-module. We say that $G$ is killed by its order if the multiplication-by-$n$ morphism $[n] : G \rightarrow G$ is the zero morphism of $R$-group schemes. This is equivalent to say that the induced morphism of Hopf $R$-algebras $ [n] : A \rightarrow A^{\otimes n}\rightarrow A$ (obtained via a composition of the diagonal map and the multiplication map on $G$) satisfies that its kernel $\text{Ker} ([n])$ contains the augmentation ideal $I=\text{Ker}(\varepsilon: A \rightarrow R)$ where $\varepsilon$ denotes the unit section of $G$. \\
Around 1960s, Grothendieck asked if every finite flat group scheme over any base scheme is killed by its order (see SGA 3, Exp. VIII Rem. 7.3.1 in \cite{SGA3x} , see also pag. 145 in \cite{Ta97} and the survey article \cite{Sch19}). This is the generalization in the context of finite flat group schemes of what is well-known in abstract group theory, i.e. if $G$ is a finite group of order $n$ then for all $g\in G$ we have $g^n =e_G$. Note that strictly speaking this fact is an immediate consequence of Lagrange's theorem, which has indeed been generalized in the context of finite flat group schemes (see for example \cite{Sha86}). We clarify this in the hope that the title of the article will be interpreted properly.\\
Grothendieck's question has been positively answered in many important cases. When the group scheme $G$ is commutative, this has been proven by Deligne around 1970. The proof has not been published by Deligne himself but it has been reproduced many times in the literature (see for example section 1 in \cite{OT70}, section 3.8 in \cite{Ta97} and section 3.3 in J. Stix's lecture notes \cite{Sti09}). When the base scheme is the spectrum of a field, or more generally is a reduced scheme, this has been proven by Grothendieck in SGA 3 Exp. VII A  section 8 (see \cite{SGA3} or see also Cor. 2.2 in \cite{Sch01}).\\
When studying Grothendieck's question for finite flat group schemes over a local Noetherian ring $R$, it is very useful to notice that it is possible to make a further reduction. First, by Krull's intersection theorem, we deduce that the question holds if and only if it holds for all local Artin quotients of $R$, i.e. we can assume without loss of generality that $R$ is a local Artin ring. This simplification is known to be very useful as it opens up the possibility of proceeding by induction on the lenght of $R$, as it will be done in this article. Moreover, it is clear that the problem of understanding when a finite flat group scheme $G$ over $R$ is killed by its order can be studied up to faithfully flat extension of $R$. Thanks to EGA 3 Tome 1 Prop. 10.3.1 in \cite{EGA1}, we have that there exists a faithfully flat local extension of $R$ such that its residue field $k$ is algebraically closed. This allows one to do a final reduction and assume that the residue field $k$ of characteristic $p$ is algebraically closed and that $R$ is a strictly Henselian ring. For the moment, without loss of generality we assume only that $k$ is a perfect field of characteristic $p$. \\
Coming back to the state of art of Grothendieck's question, the best known result in the general case is due to Schoof (see \cite{Sch01}):
\begin{theorem}[Schoof, 2001]
Let $p$ be a prime and let $R$ be a local Artin ring with maximal ideal $\mathfrak{m}_R$ and with residue field $k$ of positive characteristic $p$. Assume that $\mathfrak{m}_R^p = p\mathfrak{m}_R =0$, than any finite flat group scheme $G$ over $R$ is killed by its order. 
\end{theorem}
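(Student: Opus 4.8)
The plan is to reduce to a connected infinitesimal group scheme and then to show that the multiplication-by-order map strictly increases the augmentation filtration, the two hypotheses on $\mathfrak{m}_R$ being exactly what force the relevant graded actions to vanish.

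\textbf{Reductions.} Following the introduction I may assume $R$ is strictly Henselian with algebraically closed residue field $k$ of characteristic $p$. Over such a base the connected--\'etale sequence
\[
1 \longrightarrow G^{0} \longrightarrow G \longrightarrow G^{\text{\'et}} \longrightarrow 1
\]
is available: $G^{0}$ is a normal infinitesimal closed subgroup scheme of order $p^{r}$ and $G^{\text{\'et}}$ is the constant group scheme of a finite abstract group, of order $m$, with $n = m\,p^{r}$. Ordinary Lagrange kills $G^{\text{\'et}}$ by $m$. On points one has $g^{n} = (g^{m})^{p^{r}}$, and $g^{m}$ lies in $G^{0}$ because $[m]$ becomes trivial in the quotient $G^{\text{\'et}}$; hence it suffices to prove that $G^{0}$ is killed by $p^{r}$. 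I rename $G^{0}$ to $G$ and let $A$, $I=\ker\varepsilon$ denote its Hopf algebra and augmentation ideal, with $I$ nilpotent.

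\textbf{Strategy for the connected case.} Killing by order means $[p^{r}]^{*}=\eta\circ\varepsilon$ on $A$, i.e. $[p^{r}]^{*}(I)=0$. I will prove the stronger filtered statement
\[
[p^{r}]^{*}\bigl(I^{m}\bigr)\subseteq I^{m+1}\qquad\text{for every }m\geq 1,
\]
from which $[p^{r}]^{*}(I)\subseteq\bigcap_{m}I^{m}=0$ follows by nilpotence of $I$. Equivalently, I must show that the endomorphism induced by $[p^{r}]^{*}$ on each graded piece $I^{m}/I^{m+1}$ of the augmentation filtration is zero. Since the iterated comultiplication is primitive in degree one, on the associated graded Hopf algebra the power map $[p^{r}]^{*}$ acts in degree $m$ as multiplication by $p^{rm}$; Grothendieck's theorem over the field $k$ provides the base case $[p^{r}]^{*}(I)\subseteq\mathfrak{m}_{R}A$ that anchors this computation.

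\textbf{Where the hypotheses enter.} Because $p\in\mathfrak{m}_{R}$ and $p\,\mathfrak{m}_{R}=0$, we get $p^{2}=0$ in $R$; therefore $p^{rm}=0$ whenever $rm\geq 2$, and the graded action vanishes automatically in every bidegree except $r=m=1$. The single remaining case is the action of the $p$-th power map on the cotangent space $\omega=I/I^{2}$ of a connected group scheme of order $p$; here one shows $p\,\omega=0$ directly from the Hopf-algebra relations defining $A$, exactly as the relation $x^{p}=1$ forces $p\,(x-1)\in I^{2}$ for $\mu_{p}$. The condition $\mathfrak{m}_{R}^{p}=0$ is used to control the relative Frobenius and to guarantee that the non-primitive corrections to the coproduct, whose coefficients are divisible by $p$, are swept into $\mathfrak{m}_{R}^{p}A=0$.

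\textbf{The main obstacle.} The delicate point is that $G$ is not assumed commutative, so $A$ is not cocommutative and the clean description ``multiplication by $p^{rm}$ on the $m$-th graded piece'' is not formal: the iterated coproduct carries asymmetric cross-terms, and one must check both that these contribute nothing to $I^{m}/I^{m+1}$ beyond the diagonal multiple of $p^{rm}$, and separately that $p\,\omega=0$ in the order-$p$ case. It is precisely here that the two hypotheses must be used in tandem --- $p\,\mathfrak{m}_{R}=0$ to make the relevant targets $p$-torsion and $\mathfrak{m}_{R}^{p}=0$ to annihilate the binomial cross-terms and bound the Frobenius --- and making this interplay rigorous in the non-commutative setting is the crux of the argument.
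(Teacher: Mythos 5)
Your reductions are sound: over a strictly Henselian base the connected--\'etale sequence exists, $g^m$ lands in $G^{0}(T)$ for every $T$-point $g$, and it suffices to kill $G^{0}$ by $p^{r}$. The graded computation is also correct, and in fact more formal than you fear: for any finite flat group scheme, commutative or not, $[N]^{*}$ is an $R$-algebra endomorphism of $A$ preserving $I$, and the counit axiom forces $\Delta(x) - x\otimes 1 - 1\otimes x \in I\otimes I$ for $x\in I$, so $[N]^{*}$ acts on $I^{m}/I^{m+1}$ as multiplication by $N^{m}$; the asymmetric cross-terms you worry about land in higher filtration automatically. The genuine gap is the inference you draw from this. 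From the filtered statement $[p^{r}]^{*}(I^{m})\subseteq I^{m+1}$ for all $m$ you may conclude only $[p^{r}]^{*}(I)\subseteq I^{2}$, because $[p^{r}]^{*}$ is applied to $I$ exactly once; you cannot keep descending the filtration without applying the map again, and $[p^{r}]^{*}\circ[p^{r}]^{*}=[p^{2r}]^{*}$. Even the most favorable reading, factoring $[p^{r}]=[p]^{\circ r}$ and using that $[p]^{*}$ increases filtration, yields only $[p^{r}]^{*}(I)\subseteq I^{r+1}$, and $I^{r+1}\neq 0$ in general: a deformation of $\alpha_{p^{r}}$ has augmentation ideal of nilpotency exponent $p^{r}$, vastly larger than $r+1$. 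Iterating instead proves that $G^{0}$ is killed by some power $p^{rs}$ with $s$ comparable to that nilpotency exponent --- killing by \emph{some} power of $p$, not by the order. The cleanest way to see the inference cannot be repaired by bookkeeping: over $k$ itself every graded action vanishes (since $p=0$ there), so if your implication were valid it would give a purely formal proof of Grothendieck's field case; but that case is a genuinely non-formal d\'evissage through Frobenius kernels and $p$-Lie algebras (SGA 3, Exp. VIIA, Sec. 8).

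A symptom of the same problem is that your argument never uses $\mathfrak{m}_{R}^{p}=0$ in any identifiable step --- only $p^{2}=0$, which follows from $p\mathfrak{m}_{R}=0$ --- and your final two paragraphs invoke it only programmatically, while conceding that the crux is unproven. For the record, Schoof's actual proof (which this paper cites and summarizes rather than reproves) runs along entirely different lines: after the same reduction to strictly Henselian $R$, it splits according to whether the special fiber $G\otimes_{R}k$, of order $p^{m+1}$, is killed by $p^{m}$. In the affirmative case the hypotheses $\mathfrak{m}_{R}^{p}=p\mathfrak{m}_{R}=0$ are exactly what make a direct argument work; in the negative case one proves $G\otimes_{R}k\cong\mu_{p^{m}}$ or $G_{1}=\alpha_{p}\rtimes\mu_{p^{m}}$, and then shows by classifying deformations of these two specific groups that every finite flat $R$-group scheme lifting them is killed by its order, with no hypothesis on the length of $R$. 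To salvage your approach you would need a fundamentally new ingredient at the point indicated, not a refinement of the graded computation.
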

\noindent
After reducing the problem to assuming that $R$ is strictly Henselian and $k$ is an algebraically closed field of characteristic $p$, the above result is proven by discussing separately two cases depending on whether the base change $G\otimes_R k$ of $G$ from $R$ to its residue field $k$ (which is a $k$-group scheme of order say $p^{m+1}$, just for psychological reasons and notation consistency) is killed or not by $p^{m}$.\\
Schoof proceeds by showing first that under the strong hypothesis on the lenght of $R$, the groups $G$ whose base change $G\otimes_R k$ is already killed by $p^{m}$ are killed by their order. The proof continues by showing that $G\otimes_R k$ is not killed by $p^{m}$ if and only if $G\otimes_R k$ is isomorphic to $\mu_{p^{m}} / k $ or to the matrix $k$-group scheme 
$$G_1 = \begin{pmatrix}1 & \alpha_p \\ 0 &\mu_{p^{m}}\end{pmatrix} =\Bigg\{ \begin{pmatrix} 1 & x \\ 0 & y\end{pmatrix} \text{ : }x^p=0 , \;y^{p^{m}}=0 \Bigg\},$$
endowed with the usual matrix multiplication. It is interesting to remark that these groups are historically very important in the study of finite group schemes. Indeed, for example when $m=1$, we have the non-commutative $k$-group scheme $G_1$ of order $p^2$ whose existence reflects the existence of a non-trivial action of $\mu_p$ on $\alpha_p$ (which are both commutative groups of order $p$). This phenomenon does not have any analogue in the classical theory of finite abstract groups where every group of order $p^2$ is commutative. \\ 
Note that the group $G_1$ is denoted $G_0$ in \cite{Sch01} but it will be clear soon enough why we adopt this new notation and why it is not an explicit tentative to confuse the reader. 
Finally, Schoof concludes by showing that all deformations of $G_1$ and $\mu_{p^m}$ for some positive integer $m$ are killed by their order. \\
In other words, because finite flat deformations of $\mu_{p^m}$ over local Artin rings $R$ are well-known to be trivial (as we see will see later on), Schoof's strategy is essentially reduced to the following result for which no restrictions are needed on the length of the local Artin ring $R$: 
\begin{theorem}[Schoof, 2001]
Let $p$ be a prime and let $R$ be a local Artin ring of residue field $k$ of positive characteristic $p$. Any finite flat $R$-group scheme $G$ such that $G \otimes_R k \cong G_1$ is killed by its order. 
\end{theorem}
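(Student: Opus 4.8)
The plan is to induct on the length $\ell(R)$, after the reductions recalled above that let us assume $k$ algebraically closed and $R$ strictly Henselian; throughout write $n=p^{m+1}$ for the order of $G$. The base case $R=k$ is the assertion that $G_1$ itself is killed by $n$. Writing $G_1=\alpha_p\rtimes\mu_{p^m}$ and a point as a pair $(\xi,\eta)$, with $\eta\in\mu_{p^m}$ acting on $\xi\in\alpha_p$ by $\xi\mapsto\eta\ast\xi$, the semidirect-product power formula gives $(\xi,\eta)^{n}=\left(\sum_{i=0}^{n-1}\eta^{i}\ast\xi,\ \eta^{n}\right)$. Since $\eta^{p^m}=1$ we get $\eta^{n}=1$, and grouping the $p^{m+1}$ summands into $p$ identical blocks of length $p^{m}$ shows $\sum_{i=0}^{n-1}\eta^{i}\ast\xi=p\cdot\big(\sum_{i=0}^{p^m-1}\eta^{i}\ast\xi\big)=0$ in characteristic $p$; hence $G_1$ is killed by $n$.

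For the inductive step I would first fix the structural backbone common to all deformations. The commutator subgroup of $G_1$ is exactly $\alpha_p$, so that $G_1^{\mathrm{ab}}\cong\mu_{p^m}$ is of multiplicative type. The first task is to propagate this over $R$: I would show that $N:=[G,G]$ is a finite flat normal subgroup scheme of order $p$ lifting $\alpha_p$, with quotient $G/N$ finite flat of order $p^{m}$ lifting $\mu_{p^m}$. Two classical inputs are decisive here. First, finite group schemes of multiplicative type are rigid over a Henselian local base, so $G/N\cong\mu_{p^m,R}$ canonically and the projection $G\to\mu_{p^m,R}$ lifts the one on the special fibre. Second, the Oort--Tate classification realizes the commutative order-$p$ group $N$ as an explicit scheme $\mathcal{G}^{\mathrm{OT}}_{a,b}$ over $R$ with $a\in\mathfrak m_R$. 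Thus every deformation $G$ is reconstructed from the triple consisting of $N$, the action $\mu_{p^m,R}\to\underline{\mathrm{Aut}}(N)$, and the class of the extension of $\mu_{p^m,R}$ by $N$.

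The heart of the matter --- and the step I expect to be hardest --- is the explicit infinitesimal deformation theory governing this reconstruction. Passing to a small extension $0\to I\to R'\to R\to 0$ with $\mathfrak m_{R'}I=0$ and $\dim_k I=1$, I would deform the three pieces of data in turn and show that each layer is controlled by a $\mathrm{Hom}$ or $\mathrm{Ext}$ group that can be evaluated by hand. The genuine difficulty is that $G$ is non-commutative, so commutative Dieudonn\'e theory does not apply and the action and the extension data must be deformed simultaneously; the computation must rule out exotic deformations and show that, up to isomorphism, every lift is again built as the semidirect product $N'\rtimes\mu_{p^m,R'}$, with $\mu_{p^m,R'}$ acting through the canonical character determined by the Oort--Tate parameters of $N'$. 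This classification is exactly where the explicit family named in the title is pinned down, and it is the technical core of the argument.

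Granting the classification, the verification of killing is a computation on $N\rtimes\mu_{p^m,R}$ formally identical to the base case, now carried out over $R$. On points, $(\xi,\eta)^{n}=\big(\sum_{i=0}^{n-1}\eta^{i}\ast\xi,\ \eta^{n}\big)$ with $\eta^{n}=1$, and grouping the summands into $p$ blocks of length $p^m$ and using $\eta^{p^m}=1$ again collapses the first coordinate to $p\cdot T(\xi)=[p]_N\big(T(\xi)\big)$, where $T(\xi)=\sum_{i=0}^{p^m-1}\eta^{i}\ast\xi\in N$. Here $p$ need no longer vanish in $R$, so the conclusion is not automatic; instead it follows from the already-known commutative case, which guarantees that the commutative order-$p$ group $N$ is killed by $[p]_N$. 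Hence both coordinates vanish, $[n]\colon G\to G$ is the zero morphism, and feeding this back through the induction on $\ell(R)$ shows that $G$ is killed by its order, as claimed.
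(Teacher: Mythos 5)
Your overall architecture---induction on the length of $R$, a classification of every deformation of $G_1$ as a semidirect product $N \rtimes \mu_{p^m , R}$ with $N$ of order $p$, followed by an explicit power computation---is the same skeleton as the argument in this paper (and in Schoof's original one), and your final verification step is sound; indeed, invoking Deligne's theorem for the commutative order-$p$ group $N$ is a clean way to avoid first proving that $\mathrm{char}(R)=p$, which the paper does establish. The base case over $k$ is also correct. But there is a genuine gap exactly where you say you ``expect it to be hardest'': the classification itself is never carried out, only described. Proving that every lift is again a semidirect product of an Oort--Tate group by $\mu_{p^m}$, with no exotic deformations, \emph{is} the technical content of the theorem; in the paper it occupies essentially all of Section 3 and requires three nontrivial steps: (a) pinning down the underlying $R$-scheme structure by interpreting the defining polynomials $(f,g)$ as a $1$-cocycle and proving $H^1 (G_\lambda , \overline{V})=0$ via the Lyndon--Hochschild--Serre argument and the vanishing of cohomology of the diagonalizable quotient $\mu_{p^m}$; (b) computing the adjoint representation and $H^2 (G_\lambda , V_{\mathrm{ad}})$ so as to classify the possible group laws through the obstruction theory of SGA~3 Exp.~III (this is where the extra family $\tilde{H}_a$, i.e.\ the Oort--Tate parameter, is detected---for $G_1$ it occurs precisely when $m=1$); and (c) constructing the order-$p$ normal subgroup and using rigidity of $\mu_{p^m}$ on the quotient to eliminate the remaining scheme-structure parameter $c$. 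None of (a)--(c), nor any substitute control of the relevant cohomology and obstruction groups, appears in your proposal.

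There is also a specific unsupported claim inside your sketch: that $N:=[G,G]$ is a finite flat normal subgroup scheme of order $p$ lifting $\alpha_p$. For a non-commutative finite flat group scheme over an Artin ring it is not a standard fact that the commutator subsheaf is representable by a flat closed subgroup scheme, and you give no argument for it. This is precisely why the paper does not take a commutator: it constructs $N$ by hand as the closed subscheme $y=0$ (which only makes sense after step (a) has determined the scheme structure), endows it with a group law by transporting the infinitesimal structure via the deformation theorems of SGA 3 (Theorems \ref{group} and \ref{groupm} in the paper), and then checks normality directly. So both the existence of your $N$ and the reconstruction of $G$ from the triple (subgroup, action, extension class) rest on the very deformation-theoretic analysis that your proposal defers.
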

\noindent
We define now a certain family of finite flat group schemes $G_\lambda$ over $k$ where $\lambda \in \{1, \cdots p^{m}-1 \}$ and which interpolates $G_1 = \begin{pmatrix}1 & \alpha_p \\ 0 &\mu_{p^{m}}\end{pmatrix}$ exactly when $\lambda=1$.\\
Let $G$ be any finite flat multiplicative group scheme over $k$ acting on $\alpha_p$, i.e. we have a non-trivial morphism $\varphi : G \rightarrow \text{Aut}_k (\alpha_p )\cong \mathbb{G}_m$. Then there exists a unique extension of $G$ by $\alpha_p$, i.e. $\text{Ext}_\varphi^1 (G, \alpha_p )\cong 0$ (see Cor. 6.4, Chapt. 3 in \cite{DG70}). This is a consequence of the fact that group extensions can be embedded in the group of extensions of fpqc sheaves (see Rem. 2.5 in \cite{DG70}) and such group is trivial under the assumption that $k$ is perfect. More generally, this holds when the base scheme is the spectrum of a small $k$-ring R (in the sense of Demazure and Gabriel see \cite{DG70}) such that $R/R^p =0$. For more details on the relation between affine group schemes and fpqc sheaves we refer the reader to the nice summary in \cite{Sti09} and to SGA 1 Exp. VI for a more in depth treatment (see for example section 6 in \cite{SGA1}).\\
Taking $G=\mu_{p^m}$ we have that $\varphi$ is the restriction to $\mu_{p^m}$ of the standard action of $\mathbb{G}_m$ on $\mathbb{G}_a$ which corresponds uniquely to an element $\lambda \in \{1, \cdots, p^m -1\}$. In more precise terms, for every $\lambda \in \{1, \cdots, p^m -1\}$ there is a unique finite flat $k$-group scheme $G_\lambda$ corresponding to the unique split exact sequence $0 \rightarrow \alpha_p \rightarrow G_\lambda \rightarrow \mu_{p^m} \rightarrow 0$. In other words, the $k$-group scheme $G_\lambda$ is the semi-direct product $\alpha_p \rtimes_\lambda \mu_{p^m}$ where the subscript $\lambda$ indicates exactly the action of $\mu_{p^m}$ which we are using to build the semi-direct product. To be completely explicit, we have that the $k$-group scheme $G_\lambda$ is described as:
$$G_\lambda \cong \text{Spec} \Big( \faktor{k[x, y]}{(x^p , y^{p^m})} \Big)$$
with group law: 
$$ k[x, y ]/(x^p , y^{p^m}) \rightarrow k[x, y ]/(x^p , y^{p^m}) \otimes_k k[x, y ]/(x^p , y^{p^m})$$
$$x \mapsto (1+y)^\lambda \otimes x + x \otimes 1 $$
$$y \mapsto y \otimes 1 + 1 \otimes y + y\otimes y$$
More concisely, we adopt the notation $(x, y) \circ (x' , y') = ((1+y)^\lambda x' + x , y + y' + yy')$ for the group law in $G$ between generic elements $h=(x, y)$ and $h'=(x' , y') $. It is straightforward to verify that when $\lambda=1$ one gets exactly the group $G_1$ described above which appears in \cite{Sch01}. \\
We show that $G_\lambda$ is a family of finite, flat, non-commutative group schemes of order $p^{m+1} $ which depend only on the $p$-adic valuation of $\lambda$, and that $G_\lambda$ is not killed by $p^m$ if and only if $\lambda$ is a unit. Hence, taking a generic $\lambda$ puts us outside of the range of Schoof's theorem. The fact that $\lambda$ is not in general a unit presents many subtle challenges which do not allow us to simply extend the proof of Schoof and force us to find a new and different strategy, especially when one wants to determine explicitly the $R$-scheme theoretical structure of the generic deformation of $G_\lambda$. However, it is possible to find a way to extend Schoof's result without restriction on $R$ and have the following:

\begin{theorem}\label{Lagrange}
Let $R$ be a local Artin ring of positive residue characteristic $p$ and let $\lambda\in \{1, \cdots , p^{m-1}\}$. Let $G$ be a finite flat deformation over $R$ of the $k$-group scheme $G_\lambda$, then $G$ is killed by its order. 
\end{theorem}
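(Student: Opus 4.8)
The plan is to reduce the whole statement to the construction of a single short exact sequence of finite flat $R$-group schemes and then to conclude by an elementary power computation that is insensitive to the non-commutativity of $G$. First I would invoke the harmless reductions prepared in the introduction: since the property of being killed by the order is insensitive to faithfully flat local base change, I may assume that $R$ is strictly Henselian with algebraically closed residue field $k$ of characteristic $p$, and since $G$ is a deformation of $G_\lambda$ its order is $p^{m+1}$. Throughout I would keep two structural facts in reserve: the multiplicative group $\mu_{p^m}$ is killed by $p^m$, and every finite flat group scheme of order $p$ over $R$ is commutative and killed by $p$ (Deligne's theorem in the commutative case, or directly the Oort--Tate classification).

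The key reduction is the following. Suppose I can produce a surjective homomorphism of $R$-group schemes $\pi \colon G \twoheadrightarrow \mu_{p^m}$ lifting the canonical projection $G_\lambda \to \mu_{p^m}$ whose kernel $N = \ker(\pi)$ is finite and flat of order $p$ over $R$. Then $G$ is killed by $p^{m+1}$ for a reason that never sees the non-commutativity: for any $R$-algebra $S$ and any $g \in G(S)$, the cyclic subgroup generated by $g$ is abelian, so $g^{p^{m+1}} = (g^{p^m})^p$; moreover $\pi(g^{p^m}) = \pi(g)^{p^m} = 1$ because $\mu_{p^m}$ is killed by $p^m$, so $g^{p^m}$ lies in $N(S)$, and since $N$ is killed by $p$ we get $g^{p^{m+1}} = (g^{p^m})^p = 1$. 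As this holds functorially, the power morphism $[p^{m+1}] \colon G \to G$ is trivial, i.e. $G$ is killed by its order. In particular the entire difficulty is concentrated in producing the sequence $0 \to N \to G \to \mu_{p^m} \to 0$ over $R$: neither the Oort--Tate parameter of $N$ nor the precise $\lambda$-twisted action intervenes in the final step.

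Constructing this short exact sequence is the main obstacle, and it is precisely the classification via infinitesimal deformation theory announced in the abstract. Concretely, I would lift the multiplicative quotient: the projection $G_\lambda \to \mu_{p^m}$ corresponds to the group-like unit $1+y$ of $\mathcal{O}(G_\lambda)$ with $(1+y)^{p^m}=1$, and I would show that this group-like element lifts to a group-like unit $u \in \mathcal{O}(G)$ with $u^{p^m}=1$, proceeding by induction on the length of $R$ along small extensions $R \to R_0$. At each inductive step the set of lifts of a given homomorphism into $\mu_{p^m}$ is a torsor under, or is obstructed by, an explicit cohomology group built from $\text{Hom}$ and $\text{Ext}^1$ of $G_\lambda$ with $\mu_{p^m}$, which I would compute directly from the presentation of $G_\lambda$ using the rigidity of $\mu_{p^m}$ and the analogue over $R$ of the vanishing $\text{Ext}^1_\varphi(\mu_{p^m}, \alpha_p)=0$ cited above. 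Once $u$ is constructed, the powers $1, u, \dots, u^{p^m-1}$ are $R$-linearly independent by Nakayama, so the induced homomorphism $\pi \colon G \to \mu_{p^m}$ is faithfully flat and $N = \ker(\pi)$ is automatically finite flat of order $p^{m+1}/p^m = p$, hence an Oort--Tate deformation of $\alpha_p$.

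I expect the genuine difficulty to lie in the case where $\lambda$ is not a unit, i.e. $p \mid \lambda$. There the action of $\mu_{p^m}$ on $\alpha_p$ is trivial to first order, so the infinitesimal deformation functor of $G_\lambda$ is larger and less rigid than in Schoof's case $\lambda=1$, and controlling the obstruction groups forces one to determine the deformed Hopf algebra explicitly rather than to appeal formally to rigidity. The most delicate point is thus to show that, despite possibly non-trivial deformations of the underlying scheme and of the $\alpha_p$-kernel, the multiplicative quotient $\mu_{p^m}$ always survives over $R$ and the kernel remains flat of order $p$; granting this, the power computation of the second paragraph finishes the proof uniformly in $\lambda$ and in $R$.
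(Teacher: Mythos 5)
Your reduction in the second paragraph is correct and is genuinely slicker than the paper's concluding step: granted a short exact sequence $0 \to N \to G \to \mu_{p^m} \to 0$ of finite flat $R$-group schemes with $N$ of order $p$, the pointwise argument $g^{p^{m+1}} = (g^{p^m})^p = e$ is valid (it uses only that the projection is a homomorphism, that $\mu_{p^m}$ is killed by $p^m$, and that an order-$p$ group scheme is commutative, hence killed by $p$, by Oort--Tate and Deligne), and it is uniform in $\lambda$, in the Oort--Tate parameter of $N$, and even in the characteristic of $R$. The paper instead deduces Theorem \ref{Lagrange} from the full classification: in case (i) of Theorem \ref{maindef} the deformation is a trivial base change, and in case (ii) one computes $[p^m]$ explicitly on $\tilde H_a$. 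Note, however, that the paper's proof of the classification constructs exactly your exact sequence along the way: it endows $N$ with a group structure making it a finite flat normal subgroup of $G$, forms $G/N$ by Grothendieck's quotient theorem, and identifies $G/N \cong \mu_{p^m}$ by rigidity of multiplicative-type groups. So your finishing move is compatible with the paper and would shorten its endgame.

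The gap is in your third paragraph, which is where all of the difficulty actually sits. You propose to lift the quotient map $G_\lambda \to \mu_{p^m}$ (equivalently the group-like element $1+y$) along small extensions, controlling the lifting problem by cohomology groups that you would compute ``using the rigidity of $\mu_{p^m}$'' and the vanishing $\text{Ext}^1_\varphi (\mu_{p^m}, \alpha_p) = 0$. Two concrete problems. First, even the scheme-theoretic lift required by condition (1) of Theorem \ref{groupm} presupposes that $y^{p^m}=0$ in $\mathcal{O}(G)$, i.e. that $\pi g = 0$ in the presentation $G=\text{Spec}(R[x,y]/(x^p - \pi f, y^{p^m}-\pi g))$; this is the content of the paper's computation $H^1 (G_\lambda , \overline{V})=0$ and cannot be skipped. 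Second, and more seriously, the obstruction to lifting a homomorphism into $\mu_{p^m}$ lives (by Theorem \ref{groupm}) in the Hochschild group $H^2\bigl(G_\lambda , \text{Lie}(\mu_{p^m})\otimes (\pi)\bigr)\cong H^2 (G_\lambda , \mathbb{G}_a )$ with trivial coefficient action, since $\mu_{p^m}$ is commutative; this has nothing to do with $\text{Ext}^1 (\mu_{p^m},\alpha_p)$. By Proposition \ref{quot} and the invariance computation in the proof of Proposition \ref{adjoint}, this obstruction group equals $\langle W_p \rangle \neq 0$ precisely when $v_p(\lambda)=m-1$, which is the one hard case. So the formal argument ``the obstruction group vanishes, hence the lift exists'' fails exactly there; you would need to prove that the obstruction \emph{class} vanishes, and no rigidity-type argument will do that, since the existence of the genuinely non-trivial family $\tilde H_a$ shows that this $H^2$ acts non-trivially on the deformation problem. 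The paper circumvents this by lifting the other side of the extension: it first produces the order-$p$ normal subgroup $N\subset G$ (where the group-law analysis is controlled by Lemma \ref{lemmah}) and only then obtains the $\mu_{p^m}$-quotient. In short, your reduction is a nice and correct final step, but the construction it relies on still requires essentially the paper's entire deformation-theoretic classification, and the route you sketch toward it would break down in the critical case $v_p(\lambda)=m-1$.
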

\noindent
This result constitutes a positive answer in a new case to Grothendieck's question on whether all finite flat group schemes over any base scheme are killed by their order. The above theorem is a direct consequence of the complete classification of all the deformations of the non-commutative $k$-group scheme $G_\lambda$. On a side note, we mention that one could also consider the case of $\lambda=0$, which corresponds to the case of the $k$-group scheme $G_0 =\alpha_p \times \mu_{p^m}$. However, while it is possible to use the techniques in this article to show that the deformations of $G_0$ in characteristic $p$ are also killed by their order, this still leaves the problem open. Indeed, the $k$-group scheme $G_0$ admits deformations in characteristic zero (because $\alpha_p$ does) which a priori have not a clear $R$-schematic description. We hope to come back to this problem later on.\\
Going back to our original question, the study of the deformation problem for $G_\lambda$ with $\lambda \in \{1, \cdots, p^m\}$ will allow us to prove the following complete and explicit classification:
\newpage
\begin{theorem}\label{maindef}
Let $p$ be a prime and let $R$ be a local Artin ring of perfect residue field $k$ of positive characteristic $p$. Let $G$ be a deformation over $R$ of the $k$-group scheme $G_\lambda \cong \alpha_p \rtimes_\lambda \mu_{p^m}$  with parameter $\lambda \in \{1, \cdots , p^{m}-1\}$.\\
Then the characteristic of $R$ is $p$ and we have the following classification:\\
\break
\indent(i) if $v_p (\lambda)\not=m-1$, all deformations of $G_\lambda$ over $R$ are trivial, i.e. 
$$G \cong (\alpha_p \rtimes_\lambda \mu_{p^m}) \times_{\text{Spec} (k)} \text{Spec}(R) ; $$
\indent(ii) if $v_p (\lambda)=m-1$, all deformations of $G_{p^{m-1}}$ over $R$ form a 1-dimensional family (over $k$) of\\
\indent\indent\indent\indent\indent\indent\indent non-commutative finite flat $R$-group schemes of order $p^{m+1}$.\\
\break
\indent In particular, in (ii), there is a unique $a\in k$ such that we have an isomorphism:
$$G \cong \tilde{H}_{a} \text{ as }R\text{-group schemes, }$$
\indent where $\tilde{H}_{a}\cong \text{Spec}(R[x, y] / (x^p , y^{p^m } ))$ is endowed with the group law:
$$
\begin{aligned}
x &\mapsto (1+y)^{p^{m-1}} \otimes x + x \otimes 1 +a\pi W_p (x\otimes 1 , 1\otimes x),\\
y &\mapsto y\otimes 1 + 1 \otimes y + y \otimes y.
\end{aligned}
$$
for the polynomial $W_p (x, x')=\frac{(x+x')^p -x^p -x'^p}{p}$. 
\end{theorem}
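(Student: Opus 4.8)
The plan is to prove both conclusions — that $\operatorname{char}R=p$ and the explicit classification — simultaneously, by writing $R$ as an iterated small extension of $k$ and inducting on its length. Since $R$ is local Artinian, it is obtained from $k$ by finitely many small extensions $0\to I\to R'\to R\to 0$ with $I$ a one-dimensional $k$-vector space killed by $\mathfrak m_{R'}$, so it suffices to understand how a given deformation lifts across one such step. I would first install two rigidity reductions that collapse the problem to a single unknown. The underlying $R$-algebra of any deformation can be normalized to $A=R[x,y]/(x^p,y^{p^m})$: the special fiber is a complete intersection, hence deforms without obstruction, and the defining relations can be straightened by adjusting the generators. Moreover, because finite flat deformations of $\mu_{p^m}$ are trivial, the sub-Hopf-algebra generated by $y$ is rigid and we may keep $1+y$ grouplike, i.e. $\Delta(y)=y\otimes 1+1\otimes y+y\otimes y$. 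The whole deformation is therefore recorded in the single coordinate $\Delta(x)$, which reduces modulo $\mathfrak m_R$ to $(1+y)^\lambda\otimes x+x\otimes 1$.

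The characteristic statement falls out of the constraint that $\Delta$ be an algebra map, so that $\Delta(x)^p=\Delta(x^p)=0$ in $A\otimes_R A$. Writing $\Delta(x)=(1+y)^\lambda\otimes x+x\otimes 1+\eta$ with $\eta\in\mathfrak m_R\,(A\otimes_R A)$ and expanding the $p$-th power, the coefficient of the basis monomial $x^{p-1}\otimes x$ is $p\,(1+y)^\lambda$ up to contributions coming from $\eta$, which are of strictly smaller length. Since $(1+y)$ is a unit, this forces $p\cdot 1_R=0$ modulo the lower-order terms, and an induction on the length of $R$ upgrades the congruence to the equality $p\cdot 1_R=0$; thus $\operatorname{char}R=p$. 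From here $p=0$ in $R$, and the polynomial $W_p(x,x')=\frac{(x+x')^p-x^p-x'^p}{p}$, whose coefficients are the integers $\tfrac1p\binom{p}{i}$, is a well-defined element of $R[x,x']$ to which we can appeal.

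The heart of the argument is then a single, explicit, $\mu_{p^m}$-equivariant deformation-cohomology computation. Over one small extension I would write $\Delta(x)=(1+y)^\lambda\otimes x+x\otimes 1+\eta$ with $\eta\in I\otimes_k(A_k\otimes_k A_k)$ and read off three constraints: coassociativity turns $\eta$ into a (twisted, because of the non-commutative factor $(1+y)^\lambda$) symmetric $2$-cocycle, the counit axiom fixes its augmentation, and the relation $\Delta(x)^p=0$ cuts out the admissible classes; the algebra automorphisms $x\mapsto x+(\text{terms in }I)$ that fix the special fiber act as the coboundaries. The $\mu_{p^m}$-action of weight $\lambda$ acts on the resulting cohomology, and a cocycle built from $W_p$ — which is homogeneous of degree $p$ in the $x$-variables — transforms with weight $p\lambda\bmod p^m$. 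Such a class is $\mu_{p^m}$-invariant, and hence survives as a genuine (non-coboundary) deformation direction rather than being absorbed, precisely when $p\lambda\equiv 0\pmod{p^m}$, i.e. when $v_p(\lambda)=m-1$. The expected output of the computation is that the relevant $H^2$ vanishes when $v_p(\lambda)\neq m-1$ and is one-dimensional, spanned by the $W_p$-class, when $v_p(\lambda)=m-1$.

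The dichotomy of the theorem is then immediate. If $v_p(\lambda)\neq m-1$ the tangent and obstruction spaces vanish at every stage of the induction, so every deformation is trivial, which is (i). If $v_p(\lambda)=m-1$, every lift differs from the trivial one by a scalar multiple of the $W_p$-cocycle; propagating this through the induction on the length of $R$ shows that the entire deformation is governed by one parameter, yielding the displayed group law with correction $a\pi\,W_p(x\otimes 1,1\otimes x)$ and a unique $a\in k$ with $G\cong\tilde H_a$, which is (ii). The \emph{main obstacle} I anticipate is exactly this cohomology computation carried out by hand: in the non-commutative setting one cannot invoke the Dieudonn\'e or fppf-$\operatorname{Ext}$ machinery available for commutative groups, so coassociativity, the counit, and the nilpotence relation $\Delta(x)^p=0$ must be analyzed together, the exact $W_p$-cocycle must be pinned down (including the precise twist by powers of $1+y$ needed to make it coassociative), and one must verify that no further deformation directions or obstructions appear at higher length — that is, that the one-parameter family neither grows nor is obstructed.
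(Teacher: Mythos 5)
Your cohomological core is the right one and coincides with the paper's: the induction on the length of $R$, the extraction of the coefficient of $x^{p-1}\otimes x$ from $\Delta(x)^p=0$ to force $\operatorname{char}R=p$, and the observation that the $W_p$-class transforms under $\mu_{p^m}$ with weight $p\lambda$, hence is invariant (and yields a one-parameter family) exactly when $v_p(\lambda)=m-1$, are respectively the paper's first step, its Proposition~\ref{adjoint}, and its Lemma~\ref{lemmah}. The genuine gap lies in the two ``rigidity reductions'' you install at the outset, which is precisely where the paper has to work hardest. You normalize the underlying $R$-algebra to $R[x,y]/(x^p,y^{p^m})$ on the grounds that the special fiber is a complete intersection and ``hence deforms without obstruction''. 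Unobstructedness makes the deformation functor smooth; it does not make deformations trivial, and the claim is false as stated: $k[x]/(x^p)$ is a complete intersection, yet already in characteristic $p$ the Oort--Tate groups $R[\tau]/(\tau^p-a\tau)$ of Proposition~\ref{Tate} (take $R=k[\pi]/(\pi^2)$, $a=\pi$, $b=0$, with additive group law) are finite flat deformations of $\alpha_p$ whose underlying algebra is \emph{not} isomorphic to $R[\tau]/(\tau^p)$: in $R[\tau]/(\tau^p-\pi\tau)$ one has $\tau^p=\pi\tau\neq0$, whereas in $R[\tau]/(\tau^p)$ every element of the maximal ideal has vanishing $p$-th power. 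Worse, your normalization argument nowhere uses $\lambda\neq0$ or the group law, but its conclusion is false for $\lambda=0$: $G_0=\alpha_p\times\mu_{p^m}$ admits deformations $H\times\mu_{p^m}$ with $H$ a nontrivial Oort--Tate group, as the paper's own remark points out. So algebra-rigidity of deformations of $G_\lambda$ is a theorem that requires the group structure with $\lambda\neq 0$; it cannot be an input.

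What the paper actually does at this point is the following, and none of it appears in your proposal: a computation $H^1(G_\lambda,\overline V)=0$ only pins the algebra down to $R[x,y]/(x^p-c\pi y^{p^{v+1}},y^{p^m})$ for an unknown $c\in k$; when $v_p(\lambda)=0$ Schoof could kill $c$ by the substitution $x\mapsto x-\omega y$ with $\omega^p=c\pi$, but for $v_p(\lambda)\geq1$ no such substitution exists, and the paper instead lifts a normal subgroup structure onto $N=\operatorname{Spec}(R[x]/(x^p))$ via the SGA~3 Exp.~III lifting theorems (Theorems~\ref{group} and~\ref{groupm}), forms the quotient $G/N$, identifies it as a deformation of $\mu_{p^m}$, invokes rigidity of multiplicative-type groups for the \emph{quotient}, and derives $c=0$ from freeness of the quotient algebra. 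Your second reduction --- that ``the sub-Hopf-algebra generated by $y$ is rigid, so $1+y$ stays grouplike'' --- is circular for the same reason: a priori $\Delta(y)=y\otimes1+1\otimes y+y\otimes y+\pi h_2$ with $h_2$ involving the $x$-variables, so the subalgebra generated by $y$ need not be a sub-Hopf-algebra at all, and $\mu_{p^m}$ sits in $G_\lambda$ only as a non-normal subgroup, so its rigidity cannot be applied to a subgroup of $G$; the vanishing $h_2=0$ is an \emph{output} of the $H^2$ computation (Lemma~\ref{lemmah}), and rigidity is correctly applied to $G/N$. (Your characteristic-$p$ extraction does survive without the normalization, as the paper shows by running it with general relations $x^p=\pi f$, $y^{p^m}=\pi g$.) As written, though, your $H^2$ computation has nothing to act on until the algebra and the $y$-coordinate of the comultiplication are controlled, so the proposal does not yet prove the theorem.
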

\begin{remark}\normalfont
Note that this results extends to all $\lambda \in\{1, p^{m}-1\} $ the proposition 3.3 in \cite{Sch01} where the specific case $v_p (\lambda)=0$ is treated. Moreover, even when $v_p (\lambda)=0$, Theorem 4 ensures us that all deformations of $G_\lambda$ are trivial over $R$ rather than potentially trivial after base change to a faithful flat extension as shown in proposition 3.3 in \cite{Sch01}.  
\end{remark}
\begin{remark}\normalfont
Note that the groups $\tilde{H}_a$ for $a\in k$ are isomorphic to the semidirect product $H_a \rtimes \mu_{p^m}$ where $H_a$ is the deformation of $\alpha_p$ as described by Prop. \ref{Tate} where the action of $\mu_{p^m}$ on $H_a$ is the one corresponding to $\lambda=p^{m-1}$.
\end{remark}
\noindent
In the first section after the introduction, we prove some useful properties concerning the groups $G_\lambda$ and in the second section we state the main theorem and carry on the study of the deformation problem for the $k$-group schemes $G_\lambda$. We conclude the second section by stating and proving Theorem \ref{Lagrange}.\\
A notation remark, the base change of a $R$-group scheme via a $R$-algebra morphism $R \rightarrow S$ is usually denoted $G \times_{\text{Spec}(R)} \text{Spec}(S)$ but sometimes more concisely we adopt the notation $G \otimes_R S$. \\
\break
\textbf{Acknowledgments:} I would like to thank R. Schoof for sharing his insights on the topic. I would also like to thank A. Vanhaecke and A. Conti for many interesting conversations. I would also like to thank G. Bisson and A. Rahm in the GAATI group at University of French Polynesia who supported my research as part of the MELODIA project. The MELODIA project is funded by ANR under grant number ANR-20-CE40-0013. 

\section{The family of finite flat group schemes $G_\lambda$ over $k$}

In this short section we summarize the main properties of the family $G_\lambda$ of finite flat group schemes over a perfect field $k$ of characteristic $p$. First, note that the $k$-group scheme $G_\lambda$ has order $p^{m+1}$ and it is killed by its order. This can be checked either by a direct computation or by using the well-known fact that any finite flat group scheme over a field is killed by its order (see for example SGA 3, Exp. VIIA Prop. 8.5 in \cite{SGA3}). The family of $k$-group schemes $G_\lambda$ also satisfy the following useful properties:

\begin{lemma}
The $k$-group scheme $G_\lambda$ (of order $p^{m+1}$) is killed by $p^m$ if and only if $v:=v_p (\lambda)\geq 1$. 

\end{lemma}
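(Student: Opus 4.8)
The plan is to make the multiplication-by-$p^m$ morphism completely explicit on the universal point and then read off when it vanishes. Working with the functor of points, I would write a $B$-point of $G_\lambda$ as a pair $(a,b)$ with $a^p=b^{p^m}=0$ satisfying the given law, and set $u=1+b$, so that the second coordinate becomes multiplicative ($u\mapsto uu'$) while the first transforms as $a\mapsto u^\lambda a'+a$. A short induction on $n$ then yields
$$[n](a,b)=\Big(\big(\textstyle\sum_{i=0}^{n-1}u^{i\lambda}\big)\,a,\ u^{n}-1\Big).$$
Specialising to the universal point $(x,y)$ in $A=k[x,y]/(x^p,y^{p^m})$ and to $n=p^m$, the second coordinate is automatically killed, since in characteristic $p$ one has $u^{p^m}-1=(1+y)^{p^m}-1=y^{p^m}=0$. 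Hence $[p^m]$ is the trivial morphism precisely when the first coordinate vanishes, i.e. when $T\,x=0$ in $A$, where $T:=\sum_{i=0}^{p^m-1}(1+y)^{i\lambda}\in k[y]/(y^{p^m})$. As $1,x,\dots,x^{p-1}$ form a $k[y]/(y^{p^m})$-basis of $A$, this is equivalent to $T=0$ in $k[y]/(y^{p^m})$, and so the whole lemma is reduced to deciding the vanishing of $T$.

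The heart of the argument is to evaluate $T$ on the nose. Writing $w=(1+y)^{\lambda}$ and $z=w-1$, I would expand $(1+z)^i$ by the binomial theorem and apply the hockey-stick identity $\sum_{i=0}^{p^m-1}\binom{i}{j}=\binom{p^m}{j+1}$ to get $T=\sum_{l\ge 1}\binom{p^m}{l}z^{\,l-1}$. Lucas' theorem then shows that $\binom{p^m}{l}\equiv 0\pmod p$ for every $1\le l\le p^m-1$, so only the term $l=p^m$ survives and $T=z^{\,p^m-1}$. (Equivalently, factoring over base-$p$ digits gives $T=\prod_{s=0}^{m-1}\big(\sum_{d=0}^{p-1}w^{dp^{s}}\big)$, and $\sum_{d=0}^{p-1}t^{d}=(t-1)^{p-1}$ together with $w^{p^{s}}-1=(w-1)^{p^{s}}$ recovers the same value $z^{\,p^m-1}$.)

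It then remains to compute the $y$-adic valuation of $z$. Writing $\lambda=p^{v}\mu$ with $p\nmid\mu$ and using Frobenius, $w=(1+y)^{p^{v}\mu}=(1+y^{p^{v}})^{\mu}$, so $z=\mu y^{p^{v}}+(\text{higher order})$ has valuation exactly $p^{v}$, with nonzero leading coefficient $\mu\in k^{\times}$. Therefore $T=z^{\,p^m-1}$ has valuation exactly $p^{v}(p^m-1)$, and $T=0$ in $k[y]/(y^{p^m})$ if and only if $p^{v}(p^m-1)\ge p^m$, i.e. $p^{v}\ge 1+\tfrac{1}{p^m-1}$. Since $p^v$ is a positive integer and $1<1+\tfrac{1}{p^m-1}<2$ for $m\ge 1$, this holds precisely when $p^{v}\ge 2$, that is when $v=v_p(\lambda)\ge 1$, which is the claimed equivalence.

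The main obstacle is the exact identification $T=z^{\,p^m-1}$: a naive geometric-series manipulation only gives $T\cdot z=w^{p^m}-1=z^{p^m}=0$, which merely places $T$ in the annihilator of $z$ and is far too weak to conclude. One genuinely has to control the binomial coefficients modulo $p$ (via Lucas, or via the base-$p$ digit factorisation) to pin $T$ down on the nose. Once $T$ is known exactly, the valuation computation and the final inequality are entirely routine.
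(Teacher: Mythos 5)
Your proof is correct, and although it opens exactly as the paper does --- the same inductive formula $[p^h](x,y)=\bigl(x\sum_{i=0}^{p^h-1}(1+y)^{\lambda i},\,(1+y)^{p^h}-1\bigr)$ and the same reduction to deciding whether $T=\sum_{i=0}^{p^m-1}(1+y)^{\lambda i}$ vanishes in $k[y]/(y^{p^m})$ (you justify this reduction via the basis $1,x,\dots,x^{p-1}$, which the paper leaves implicit) --- the core of your argument is genuinely different. The paper handles the two directions by separate devices: for $v\geq 1$ it splits the sum into $p^v$ blocks of length $p^{m-v}$, each equal to the first because $(1+y)^{\lambda p^{m-v}}=1$, so that $T=p^v\,a(0,p^{m-v}-1)=0$ in characteristic $p$; for $v=0$ it transports the problem to $\lambda=1$ via the automorphism $y\mapsto(1+y)^{\lambda}-1$ of $\mu_{p^m}$ and concludes by a degree count on $\sum_{i=0}^{p^m-1}(1+y)^i$. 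You instead evaluate $T$ in closed form, $T=z^{p^m-1}$ with $z=(1+y)^{\lambda}-1$, and settle both directions at once by the single valuation computation $v_y(z)=p^v$ exactly; this is sharper --- it computes $[p^m](x,y)=(z^{p^m-1}x,0)$ on the nose rather than merely detecting vanishing --- and requires no case distinction, at the price of controlling binomial coefficients via Lucas, whereas the paper's block-summation is more elementary and is the computation reused verbatim later in the proof that the deformations $\tilde{H}_a$ are killed by their order. Two small points. First, your Lucas/hockey-stick detour is avoidable: the geometric-series manipulation you dismiss as ``too weak'' is perfectly valid if carried out in the polynomial ring $k[y]$ \emph{before} reducing modulo $y^{p^m}$, since there $z\neq 0$ is a nonzerodivisor and $Tz=(1+y)^{\lambda p^m}-1=z^{p^m}$ by Frobenius, so $T=z^{p^m-1}$ by cancellation in a domain; the zero-divisor obstruction you rightly flag only exists in the quotient ring. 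Second, the strict inequality $1<1+\tfrac{1}{p^m-1}<2$ fails in the edge case $p^m=2$, where the middle term equals $2$; the conclusion is unaffected, since the criterion $p^v\geq 1+\tfrac{1}{p^m-1}$ is still equivalent to $p^v\geq 2$, i.e.\ to $v\geq 1$, but that case deserves a separate sentence.
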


\begin{proof}
We prove first the "if" part. We recall that the group operation on $G_\lambda$ is as follows:
$$(x, y)\circ_{G_\lambda} (x', y') = (x' (1+y)^\lambda +x, y+y'+yy').$$
By induction, it is straighforward to check that for all $h\in\mathbb{ Z}_{\geq 0}$ we have that:
$$[p^h ] (x, y)= \Big(x \sum_{i=0}^{p^h -1} (1+y)^{\lambda i} , (1+y)^{p^h} -1 \Big) $$
Proving that $p^m$ kills $G_\lambda$ means to prove that $[p^m] (x, y )= (0,0)$ which boils down to prove that: 
$$\sum_{i=0}^{p^m -1} (1+y)^{\lambda i}=0$$
in the ring $k[y]/(y^{p^m})$, because it is clear that $(1+y)^{p^m}-1=0$. Note first that the positive integer $m-v$ is the minimal positive integer $h$ such that that $(1+y)^{\lambda h} =1$.  For $r\leq s$ positive integers denote
$$a(r , s )= \sum_{i=r}^{s} (1+y)^{\lambda i} .$$
The claim is to prove that $a(0, p^{m}-1)=0$. Recall that $v:=v_p (\lambda)$. We have:
$$
a(0, p^m -1 )=a(0, p^{m-v}-1 )+a(p^{m-v} , 2p^{m-v}-1)+\dots+a((p^v -1)p^{m-v} , p^v p^{m-v} -1)=$$
$$=a(0, p^{m-v} -1)+(1+y)^{\lambda p^{m-v}} a(0, p^{m-v} -1)+ \dots + (1+y)^{\lambda (p^{v} -1)p^{m-v}} a(0, p^{m-v} -1)=$$
$$=p^v a(0, p^{m-v} -1)=0
$$
where the last equality holds because $v\geq 1$ and $k$ is of characteristic $p$.\\
Now we prove the "only if" part. Assume by contradiction that $[p^m] (x, y) = (0, 0)$, i.e. that $p^m$ kills $G_\lambda$ (where $v_p (\lambda) =0$, i.e. $\lambda$ is a unit). Since $\psi_\lambda : \mu_{p^m} \rightarrow \mu_{p^m}$ such that $\psi_\lambda (y) = (1+y)^\lambda -1$ is an isomorphism, we have that $\sum_{i=0}^{p^m -1 } (1+y)^{\lambda i} =0$ if and only if $\sum_{i=0}^{p^m -1 } (1+y)^{ i} =0$. Denote by $f(y):= \sum_{i=0}^{p^m -1 } (1+y)^{ i} $. We have that $f(0)=0$ and that the degree $\text{deg} (f) = p^{m-1}$. However, as $k$-vector space we have that $\text{dim}_k (k[y] / (y^{p^m}))= p^m$ so there cannot be any non-trivial zero linear combination of the standard basis and this gives the desired contradiction.
\end{proof}

\begin{lemma}\label{units}
We have that $G_\lambda \cong G_\mu$ if and only if $v_p (\lambda)=v_p (\mu)$.
\end{lemma}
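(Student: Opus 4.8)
The plan is to prove the two implications separately: the easy direction by exhibiting explicit isomorphisms, and the harder direction by extracting from $G_\lambda$ a group-scheme-theoretic invariant that reads off $v_p(\lambda)$, namely its \emph{center}.

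For the implication $v_p(\lambda)=v_p(\mu)\Rightarrow G_\lambda\cong G_\mu$, I would look for an isomorphism that only twists the $\mu_{p^m}$-factor. Write $v=v_p(\lambda)=v_p(\mu)$ and $\lambda=p^v\lambda'$, $\mu=p^v\mu'$ with $\lambda',\mu'$ coprime to $p$. Since $\lambda'(\mu')^{-1}$ is a unit modulo $p^{m-v}$, I would choose a unit $c\in(\mathbb{Z}/p^m\mathbb{Z})^\times$ with $c\equiv\lambda'(\mu')^{-1}\pmod{p^{m-v}}$, so that $c\mu\equiv\lambda\pmod{p^m}$. Then I define $\Phi\colon G_\lambda\to G_\mu$ on points by $(x,y)\mapsto(x,\sigma(y))$, where $\sigma$ is the automorphism of $\mu_{p^m}$ determined by $1+\sigma(y)=(1+y)^c$. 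A direct comparison of the two group laws shows $\Phi$ is a homomorphism precisely because $\lambda\equiv c\mu\pmod{p^m}$: this makes $(1+y)^\lambda$ and $(1+\sigma(y))^\mu=(1+y)^{c\mu}$ act identically on the $\alpha_p$-coordinate, using $x^p=0$ and $(1+y)^{p^m}=1$. As $c$ is a unit, $\sigma$ and hence $\Phi$ are isomorphisms.

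For the converse I would compute the center $Z(G_\lambda)$ as a $k$-group scheme, by imposing that $(x_0,y_0)$ commute with the universal point $(x,y)$. The second coordinates of $(x_0,y_0)\circ(x,y)$ and $(x,y)\circ(x_0,y_0)$ always agree, while equality of the first coordinates amounts to the identity $\bigl((1+y_0)^\lambda-1\bigr)x=\bigl((1+y)^\lambda-1\bigr)x_0$ for all $x,y$. Setting $y=0$ forces $(1+y_0)^\lambda=1$, and setting $x=0$ forces $\bigl((1+y)^\lambda-1\bigr)x_0=0$ for all $y$. Using the characteristic-$p$ identity $(1+y)^{p^v}=1+y^{p^v}$, I would factor $(1+y)^\lambda-1=y^{p^v}\cdot u$ with $u$ a unit in the relevant local ring; the first condition then reads $y_0^{p^v}=0$, while the second forces $x_0=0$, since $x_0\,y^{p^v}$ has degree $p^v<p^m$ in $y$ and cannot vanish unless $x_0=0$. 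This gives $Z(G_\lambda)\cong\mu_{p^v}$ with $v=v_p(\lambda)$. Because an isomorphism of group schemes carries center to center, $G_\lambda\cong G_\mu$ yields $\mu_{p^{v_p(\lambda)}}\cong\mu_{p^{v_p(\mu)}}$, and comparing orders gives $v_p(\lambda)=v_p(\mu)$.

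The main obstacle, and really the conceptual content, is the center computation: choosing the right invariant and showing it equals $\mu_{p^v}$ on the nose, which hinges on the factorization $(1+y)^\lambda-1=y^{p^v}\cdot(\text{unit})$ in characteristic $p$. Once this is in place the forward direction is routine bookkeeping with exponents modulo $p^m$.
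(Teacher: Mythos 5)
Your proof is correct, but it takes a genuinely different route from the paper's, and the difference is most valuable in the ``only if'' direction. For the ``if'' direction the paper writes $\mu = u\lambda$ with $u$ a unit, intertwines the two actions of $\mu_{p^m}$ on $\alpha_p$ through the automorphism $\psi_u(y)=(1+y)^u-1$, and then invokes the vanishing $\text{Ext}^1_\varphi(\mu_{p^m},\alpha_p)=0$ (Demazure--Gabriel) to conclude that the semidirect product is the unique extension, hence $G_\lambda\cong G_\mu$. Your explicit map $(x,y)\mapsto(x,\sigma(y))$ with $1+\sigma(y)=(1+y)^c$ and $c\mu\equiv\lambda\pmod{p^m}$ is exactly the isomorphism this abstract argument produces, so here you have made the paper's argument computational: you avoid citing Ext-vanishing at the cost of a direct verification, which does go through because exponents of $1+y$ only matter modulo $p^m$ (as $(1+y)^{p^m}=1$ in $k[y]/(y^{p^m})$), and your construction also handles an arbitrary pair $\lambda,\mu$ of equal valuation rather than pairs related by a unit factor. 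The real divergence is the converse: the paper dispatches it in one sentence (``straightforward to conclude after comparing the group laws''), which is thin as stated, since an arbitrary isomorphism $G_\lambda\to G_\mu$ has no reason to respect the chosen coordinates or the semidirect-product presentation. Your computation of the center, $Z(G_\lambda)\cong\mu_{p^{v_p(\lambda)}}$ --- forced by $x_0=0$ and $(1+y_0)^\lambda=1$, the latter equivalent to $y_0^{p^v}=0$ via the factorization $(1+y)^\lambda-1=y^{p^v}\cdot(\text{unit})$ in characteristic $p$ --- supplies precisely the coordinate-free invariant that makes this direction airtight: isomorphisms of group schemes preserve centers, and $\mu_{p^a}\cong\mu_{p^b}$ forces $a=b$ by comparing orders. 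So your argument is marginally longer on the easy half and strictly more rigorous on the hard half; it would be a legitimate improvement to the paper's proof.
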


\begin{proof}
Let $u$ be a positive integer (strictly smaller than $p^m$) such that $v_{p} (u)=0$. We have an isomorphism of finite flat $R$-group schemes $\psi_u : \mu_{p^m} \rightarrow \mu_{p^m}$ such that $\psi_u (y)=(1+y)^u -1$, where $y$ is a chosen variable parametrizing $\mu_{p^m}$ (which is endowed with the group law $y \rightarrow y\otimes 1 + 1 \otimes y + y \otimes y$).
Let now $\lambda$ and $\mu:=u \lambda$, so we have $v_p (\lambda)=v_p (\mu)$. Let $\varphi_\lambda$ and $\varphi_\mu$ denote the two actions of $\mu_{p^m}$ corresponding respectively to the integers $\lambda$ and $\mu$. Then a direct computation shows that we have the following commutative diagram: 
\[
\begin{tikzcd}
 & \mu_{p^m}\times \alpha_p \arrow{dr}{\varphi_\mu} \\
\mu_{p^m}\times \alpha_p \arrow{ur}{\psi_u \times \text{Id}} \arrow{rr}{\varphi_\lambda} && \alpha_p
\end{tikzcd}
\]
where $\psi_u \times \text{Id}$ is an isomorphism. Via the isomorphism $\psi_u \times \text{Id}$, we have that $\text{Ext}_\lambda (\mu_{p^m} , \alpha_p)\cong \text{Ext}_\mu (\mu_{p^m} , \alpha_p)\cong0$ (see for example Cor. 6.4 in \cite{DG70}). In particular, there is a unique extension which implies that $G_\lambda \cong G_\mu$. On the other hand, if we have such isomorphism it is straightforward to conclude after comparing the group laws of $G_\lambda$ and $G_\mu$. 
\end{proof}
\newpage
\section{Classification of the deformations of $G_\lambda$}
 In this section, we study the problem of classifying as explicitly as possible all the deformations of the groups $G_\lambda$. Deformation theory for general finite flat group schemes is a relatively complicated problem in the sense that the deformation functor is not always representable. In the case of deformations of commutative groups, the problem has been solved by Oort and Mumford in a positive way (see \cite{OM68}). For example, we mention now a useful result which will be used later on and it concerns the classification of finite flat $R$-group schemes of order a prime $p$. To be precise, Oort and Tate have proven the following (see \cite{OT70} or also \cite{Oor71}):
\begin{prop}\label{Tate}
Let $p$ be a prime. There exist a polynomial $W_p \in \mathbb{Z}_p [x, y]$ such that for all $R$ complete local Noetherian ring of residue characteristic $p$ and for any finite flat group scheme $G$ over $R$ of order $p$, we have that $G=\text{Spec}(A)$ where $A=R[\tau]/ (\tau^p -a\tau)$ with group operation given by: 
$$\tau \mapsto \tau_1+\tau_2 + b W_p (\tau_1 , \tau_2)$$
and $ab=p \in R$.
\end{prop}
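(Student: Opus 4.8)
The plan is to recover the Oort--Tate classification by exploiting the natural action of the finite group $\mathbb{F}_p^\ast = (\mathbb{Z}/p\mathbb{Z})^\ast$ on $G$ given by the multiplication endomorphisms $[u]\colon G \to G$. A group scheme of prime order is commutative, and a commutative finite flat group scheme is killed by its order by the (Deligne) commutative case discussed in the introduction; hence $[p]=0$, so each $[u]$ has inverse $[u^{-1}]$ and the $[u]$ assemble into an $\mathbb{F}_p^\ast$-action on the Hopf algebra $A$ fixing the augmentation ideal $I=\ker(\varepsilon)$. The first point I would establish is that $R$ already contains all $(p-1)$-th roots of unity: the residue field $k$ contains $\mathbb{F}_p$ and hence all $(p-1)$-th roots of unity, and these lift uniquely to $R$ by Hensel's lemma, since $R$ is complete local of residue characteristic $p$ (so $p-1\in R^\ast$). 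Consequently $R[\mathbb{F}_p^\ast]$ splits as a product of copies of $R$, and $I$ decomposes into isotypic components
$$I=\bigoplus_{i=1}^{p-1} I_i,$$
where $\mathbb{F}_p^\ast$ acts on $I_i$ through the $i$-th power of the tautological character.

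Next I would pin down the shape of this decomposition. Each $I_i$ is a direct summand of the locally free module $I$, hence projective, hence free over the local ring $R$; the remaining task is to show $\operatorname{rank}_R I_i = 1$ for every $i$. I would verify this on the special fibre $G\otimes_R k$, using the classification of order-$p$ group schemes over the field $k$ (namely $\mathbb{Z}/p$, $\mu_p$, and in characteristic $p$ also $\alpha_p$), where the isotypic pieces are visibly one-dimensional; flatness of $G$ then propagates the ranks to $R$. Fixing a generator $\tau$ of the rank-one module $I_1$, multiplication in $A$ carries $I_1^{\otimes i}\to I_i$, so $\tau^i$ generates $I_i$; since $p\equiv 1 \pmod{p-1}$ one has $\tau^p\in I_1=R\tau$, which yields a relation $\tau^p=a\tau$ for a unique $a\in R$. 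This gives $A=R[\tau]/(\tau^p-a\tau)$.

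It then remains to determine the comultiplication. Writing $\Delta(\tau)=\tau\otimes 1+1\otimes\tau+\sum_{i,j} c_{ij}\,\tau^i\otimes\tau^j$ and imposing $\mathbb{F}_p^\ast$-equivariance forces the correction terms to carry total weight $\equiv 1\pmod{p-1}$, while co-associativity together with the counit axiom rigidifies the coefficients up to a single scalar $b\in R$, giving $\Delta(\tau)=\tau\otimes 1+1\otimes\tau+b\,W_p(\tau\otimes 1,1\otimes\tau)$ for a fixed polynomial $W_p$ with $p$-integral coefficients; one may take $W_p(x,y)=\bigl((x+y)^p-x^p-y^p\bigr)/p$, whose coefficients $\tfrac1p\binom{p}{i}$ lie in $\mathbb{Z}$. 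Finally the relation $ab=p$ is obtained by computing $[p]$ on $\tau$ from the formula for $\Delta$ and imposing $[p]=0$.

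The part I expect to be genuinely delicate is making $W_p$ a single universal polynomial over $\mathbb{Z}_p$, independent of $R$, and proving $ab=p$ in full generality rather than case by case. The clean way to handle both is to set up a universal object in the style of Oort--Tate: one introduces the base ring $\Lambda_p=\mathbb{Z}_p[\zeta_{p-1}]$ (over which $p-1$ is invertible and the characters split), constructs a universal order-$p$ group scheme over a ring of the form $A_p=\Lambda_p[a,b]/(ab-w_p)$ for a suitable normalization $w_p$, and shows that every $G/R$ as above is the pullback of this universal family along a unique map $A_p\to R$. All the structural identities, in particular the integrality of $W_p$ and the relation $ab=p$, are then verified once and for all over the integral domain $A_p$ (or, after inverting $p$, on its generic fibre) and transported to arbitrary $R$ by base change.
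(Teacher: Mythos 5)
First, a point of context: the paper does not prove Proposition \ref{Tate} at all. It is the Oort--Tate classification, imported by citation from \cite{OT70}, so your attempt can only be measured against the original Oort--Tate argument. Measured that way, your outline is a faithful reconstruction of exactly that argument: the $\mathbb{F}_p^\ast$-action through the maps $[u]$, the Teichm\"uller lifting of the $(p-1)$-th roots of unity into $R$, the isotypic splitting $I=\bigoplus_{i=1}^{p-1}I_i$ into free rank-one pieces checked on the special fibre, the normal form $A=R[\tau]/(\tau^p-a\tau)$, and the universal family over $\Lambda_p[a,b]/(ab-w_p)$. So this is the same route as the cited source, not a different one.

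There is, however, a genuine gap, and it sits at the very first step, which carries most of the foundational weight. You take for granted that a group scheme of prime order over $R$ is commutative and combine this with Deligne's theorem to get $[p]=0$; only then does $u\mapsto[u]$ define an action of $\mathbb{F}_p^\ast$ rather than of the multiplicative monoid $(\mathbb{Z},\times)$ (one needs $[1+p]=[1]$, i.e. $g^p=e$ for all points $g$). Commutativity of order-$p$ group schemes is classical over a field, via the classification you invoke, but that only gives cocommutativity of $A\otimes_R k$. Over the non-reduced, possibly mixed-characteristic ring $R$ there is no rigidity principle that propagates this to $A$: an identity of $R$-scheme morphisms that holds on the closed fibre need not hold over $R$ (for instance $x\mapsto\epsilon x$ on $\alpha_p$ over $k[\epsilon]$ is nontrivial yet reduces to the trivial endomorphism), and note that before commutativity is known $[p]$ is not a group homomorphism, so one cannot argue with its kernel either. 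Ruling out non-cocommutative deformations is precisely part of what Oort and Tate prove; at this level of generality the fact you quote for free has no proof independent of the theorem you are proving, so the proposal is circular at this point unless you supply a separate argument for commutativity (or directly for $[p]=0$) over $R$.

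Two lesser points. Deriving $ab=p$ by ``computing $[p]$ on $\tau$ and imposing $[p]=0$'' is not conclusive: that computation produces a single polynomial equation $F(ab)=0$ of which $ab=p$ is one solution, and over a local ring with nilpotents such an equation does not determine $ab$. In \cite{OT70} the relation comes from requiring $\Delta$ to be an $R$-algebra homomorphism, i.e. from expanding $\Delta(\tau)^p=a\Delta(\tau)$, equivalently it is built into the universal family; your closing paragraph, which verifies all identities once over the domain $\Lambda_p[a,b]/(ab-w_p)$ and descends by base change, is the correct mechanism and should replace the $[p]=0$ computation. Likewise, the claim that coassociativity and the counit axiom ``rigidify'' the coefficients $c_{ij}$ up to a single scalar $b$ is the central computation of Oort--Tate (the universal constants $w_i$ and the duality symmetry); you rightly flag it as delicate, but in the sketch it is asserted rather than proved.
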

\noindent
This result is a fundamental step in deformation theory of finite flat group schemes in order to understand how complicated deformations  might be in relation of possible necessary ramification of the base ring (here represented by the relation $ac=p$ in $R$). In particular, the above result describes all the possible deformations of the finite flat group scheme $\alpha_p$.
As the $k$-group schemes in the family $G_\lambda$ are non-commutative extra care is needed. We are ready now to present the first main result of the section: 
\begin{theorem}\label{maindef}
Let $p$ be a prime and let $R$ be a local Artin ring of perfect residue field $k$ of positive characteristic $p$. Let $G$ be a deformation over $R$ of the $k$-group scheme $G_\lambda \cong \alpha_p \rtimes_\lambda \mu_{p^m}$  with parameter $\lambda \in \{1, \cdots , p^{m}-1\}$.
Then the characteristic of $R$ is $p$ and we have the following classification:\\
\break
\indent(i) if $v_p (\lambda)\not=m-1$, all deformations of $G_\lambda$ over $R$ are trivial, i.e. 
$$G \cong (\alpha_p \rtimes_\lambda \mu_{p^m}) \times_{\text{Spec} (k)} \text{Spec}(R) ; $$
\indent(ii) if $v_p (\lambda)=m-1$, all deformations of $G_{p^{m-1}}$ over $R$ form a 1-dimensional family (over $k$) of\\
\indent\indent\indent\indent\indent\indent\indent non-commutative finite flat $R$-group schemes of order $p^{m+1}$.\\
\break
\indent In particular, in (ii), there is a unique $a\in k$ such that we have an isomorphism:
$$G \cong \tilde{H}_{a} \text{ as }R\text{-group schemes, }$$
\indent where $\tilde{H}_{a}\cong \text{Spec}(R[x, y] / (x^p , y^{p^m } ))$ is endowed with the group law:
$$
\begin{aligned}
x &\mapsto (1+y)^{p^{m-1}} \otimes x + x \otimes 1 +a\pi W_p (x\otimes 1 , 1\otimes x),\\
y &\mapsto y\otimes 1 + 1 \otimes y + y \otimes y.
\end{aligned}
$$
for the polynomial $W_p (x, x')=\frac{(x+x')^p -x^p -x'^p}{p}$. 
\end{theorem}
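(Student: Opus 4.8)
The plan is to pin down the deformed Hopf algebra structure by hand and then reduce the classification to an explicit cocycle computation. First I would exploit finite flatness: a deformation $G=\mathrm{Spec}(A)$ of $G_\lambda$ has $A$ free of rank $p^{m+1}$ over $R$, and lifting the residual generators presents $A=R[x,y]/I$ with $I$ reducing modulo $\mathfrak m_R$ to $(x^p,y^{p^m})$. The sub-Hopf-algebra generated by $y$ deforms the coordinate ring of the quotient $\mu_{p^m}$; since groups of multiplicative type are rigid over an Artin local ring (this is the triviality of deformations of $\mu_{p^m}$ flagged in the introduction), I would fix $\Delta(y)=y\otimes1+1\otimes y+y\otimes y$ and $y^{p^m}=0$, so that the quotient map $G\to\mu_{p^m}$ deforms trivially. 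It then remains to understand the kernel $N=\ker(G\to\mu_{p^m})$ and the way it is extended.

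Next I would analyze $N$, a finite flat group scheme of order $p$ deforming $\alpha_p$, using Oort–Tate (Proposition \ref{Tate}): $N=\mathrm{Spec}(R[x]/(x^p-ax))$ with comultiplication $x\mapsto x\otimes1+1\otimes x+b\,W_p(x\otimes1,1\otimes x)$ and $ab=p$. The residual $\mu_{p^m}$-action on $N$ has weight $\lambda$, given on coordinates by $x\mapsto (1+y)^\lambda\otimes x$, and the defining relation must be equivariant for it, which forces $a\bigl((1+y)^{\lambda(p-1)}-1\bigr)=0$. Writing $\lambda=p^{v}\lambda'$ with $v=v_p(\lambda)\le m-1$ and $p\nmid\lambda'$, a Lucas-theorem computation shows the coefficient of $y^{p^{v}}$ in $(1+y)^{\lambda(p-1)}-1$ is $\binom{\lambda(p-1)}{p^{v}}\equiv-\lambda'\pmod p$, hence a unit in $R$ (and $y^{p^v}\ne0$ since $v\le m-1$). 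This forces $a=0$, whence $p=ab=0$, so $\mathrm{char}\,R=p$ and $x^p=0$; equivariance likewise severely constrains the remaining Oort–Tate parameter $b$, and it is exactly this constraint that will limit the deformations of $G$.

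With the quotient fixed and the scheme structure nailed down, the whole deformation lives in the comultiplication of $x$, and the problem becomes: classify the coassociative, weight-$\lambda$-equivariant deformations of $\Delta(x)=(1+y)^\lambda\otimes x+x\otimes1+(\text{correction})$, modulo the coboundaries produced by coordinate changes $x\mapsto x+(\text{lift})$ (which give isomorphic, hence trivial, deformations). Imposing coassociativity turns into an explicit twisted symmetric $2$-cocycle equation for $\mu_{p^m}$, truncated by $x^p=0$ and $y^{p^m}=0$. The hard part of the argument is the explicit solution of this equation: I expect the interaction between the multiplicative twist $(1+y)^\lambda$, the Witt/Oort–Tate cocycle $W_p$, and the two truncations to collapse to a single divisibility condition, the key claim being that a nonzero solution modulo coboundaries exists precisely when $v_p(\lambda)=m-1$ — heuristically, exactly when the weight-$\lambda$ character of $\mu_{p^m}$ has image $\mu_p$, so that the action becomes compatible with the degree-$p$ cocycle. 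When $v_p(\lambda)\ne m-1$ every admissible cocycle is a coboundary, yielding the triviality statement (i).

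Finally, in the case $v_p(\lambda)=m-1$ I would show the solution space is one-dimensional over $k$, read off the explicit normal form ($\tilde H_a$ in the statement) by choosing a generator of that space and absorbing the scale into the fixed element $\pi\in R$ together with a parameter $a\in k$, and then verify by a direct isomorphism analysis that distinct values of $a$ exhaust the deformations and are determined uniquely, giving (ii). A genuine subtlety beyond the first–order (dual numbers) picture is that $R$ is an arbitrary Artin ring: I would either show the first–order obstruction space already governs the full deformation (e.g. by choosing the cocycle with coefficients in the socle, so that higher obstructions vanish) or integrate the deformation step by step along a composition series of $R$. Once the classification is in hand, Theorem \ref{Lagrange} follows immediately: in case (i) $G$ is a base change of $G_\lambda$, which is killed by its order, while in case (ii) one checks directly from the explicit law of $\tilde H_a$ that it is killed by $p^{m+1}$.
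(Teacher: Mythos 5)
The decisive gap is your opening step. You assume that a lift $y$ of the residual coordinate generates a sub\mbox{-}Hopf\mbox{-}algebra $R[y]/(y^{p^m})$ with the standard comultiplication, i.e.\ that the quotient map $G_\lambda \to \mu_{p^m}$ (equivalently, the flat normal subgroup $\alpha_p \subset G_\lambda$) deforms together with $G$. Rigidity of multiplicative-type groups gives \emph{uniqueness} of such a quotient if it exists; it gives no existence. A priori one only has $\Delta(y)=y\otimes 1+1\otimes y+y\otimes y+\pi h_2$ and a relation $y^{p^m}=\pi g(x,y)$ with $h_2$ and $g$ possibly involving $x$, and showing that these can be normalized away is exactly where the paper's work lies: it first proves $\mathrm{char}(R)=p$ by a direct computation (extracting the unit coefficient of $x'x^{p-1}$ in $W_p$ --- note the circularity in your plan, which uses the lifted kernel to deduce $\mathrm{char}(R)=p$, whereas any such structure over $R$ can only be produced long after characteristic $p$ is known), then pins down the defining ideal via a crossed-homomorphism identity and the vanishing $H^1(G_\lambda,\overline V)=0$, then constrains the possible group laws via the SGA~3 deformation theorems (Theorems \ref{group} and \ref{groupm}) and the computation of $H^2(G_\lambda,V_{\mathrm{ad}})$ (Proposition \ref{adjoint}, Lemma \ref{lemmah}), and only then constructs $N$, forms the quotient by Grothendieck's theorem, and invokes the rigidity of $\mu_{p^m}$. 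Your proposal compresses all of this into an unproved assertion; the Lucas-coefficient trick in your step 2 is correct as far as it goes, but it rests entirely on that assertion.

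Worse, the assumption is not merely unproved: combined with your step 2 it proves too much, so the approach cannot reach the stated classification. Granting your extension structure, the conjugation action of $G$ on $N$ factors through $G/N\cong\mu_{p^m}$ (as $N$ is commutative, being of order $p$), it is linear because all automorphisms of an order-$p$ Oort--Tate group are of the form $x\mapsto ux$, and characters of $\mu_{p^m}$ over a local ring form the constant group $\mathbb{Z}/p^m\mathbb{Z}$, so the action is exactly the weight-$\lambda$ one. Your equivariance computation then kills \emph{both} Oort--Tate parameters, not only $a$: from $b\bigl((1+y)^{\lambda(p-1)}-1\bigr)=0$ and the unit coefficient at $y^{p^{v}}$ one gets $b=0$ as well, so $N\cong\alpha_p$ on the nose; and since $H^2(\alpha_p,L')^{\mu_{p^m}}=0$ (the weight-$\lambda$ part of Proposition \ref{adjoint}), every coassociative correction confined to $\Delta(x)$ is a coboundary. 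Carried out faithfully, your method therefore concludes that all deformations are trivial for \emph{every} $\lambda$, contradicting part (ii). What it misses is that the non-trivial deformations in case (ii) carry no lift of the quotient map at all: the surviving class of Proposition \ref{adjoint} has coefficients in the \emph{trivial} summand $\mathrm{Lie}(\mu_{p^m})$ of the adjoint representation, so it perturbs $\Delta(y)$ (by the twisted cocycle $W_p(x\otimes 1,(1+y)^{p^{m-1}}\otimes x)$) and destroys the homomorphism $(x,y)\mapsto y$; freezing $\Delta(y)$ discards precisely these groups. Indeed, a comultiplication with the $W_p$-term placed in $\Delta(x)$ and $\Delta(y)$ standard is never coassociative --- already for $p=2$ the associativity defect is $a\pi\,x\otimes y^{2^{m-1}}\otimes x\neq 0$ --- so within your ansatz there is literally nothing non-trivial to find, and the one-parameter family of part (ii) must be produced the way the paper does it, through the adjoint-valued $H^2$ rather than through a lifted filtration.
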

\noindent
We now proceed in proving the above result in several different steps.
The strategy is to proceed by induction on the lenght of $R$. When the lenght of $R$ is 1, everything follows by observing that one can take $R=k$. 
Now, assume that we are under the induction hypothesis. Let $\pi \in \text{Ann} (\mathfrak{m}_R )$ and let $R' := R/\pi R$. We have that  the length of $R'$ is strictly smaller than the one of $R$, so we can apply the inductive hypothesis. First, this implies that the characteristic of $R'$ is $p$, hence there exists an element $\gamma \in R$ such that $p=\gamma \pi$. Now, the deformation $G$ over $R$ of $G_\lambda$ can be described explicitly (scheme-theoretically) by polynomials $f$ and $g$ inside $R[x,y]$ such that, writing $G=\text{Spec}(A)$, 

$$A  \cong R[x, y ]/(x^p - \pi f (x, y) , y^{p^m} - \pi g(x, y )).$$
\noindent
Now, the Hopf $R$-algebra structure on $A$ is given by the group law: 

$$(x, y)\circ_G (x', y') = (x' (1+y)^\lambda +x +\pi h_1 (x, x', y, y') , y+y'+yy' +\pi h_2 (x, x', y, y')),$$
for certain polynomials $h_1$ and $h_2$ in $R[x, x' , y, y' ] / (x^p , x'^p , y^{p^m} , y'^{p^m} )$. \\
We first prove that the characteristic of $R$ is $p$, i.e. every deformation of $G_\lambda$ lives in positive characteristic $p$. Consider two generic points $h=(x,y)$ and $h' =(x', y')$ in $G$ and impose that $h\circ_{G} h'$ is still an element in $G$ to deduce relations concerning the polynomials $f, g, h_1$ and $h_2$. \\
The following conditions have to hold:
$$
\begin{cases}
 [ x' ( 1+y )^{\lambda} +x + \pi h_1 ]^p - \pi f (x' (1+y)^\lambda + x + \pi h_1 , y + y' + y y' +\pi h_2 )=0 \\
[y + y' + y y' +\pi h_2 ]^{p^m} - \pi g (x' (1+y)^\lambda +x+\pi h_1 , y + y' + y y' +\pi h_2 )=0
\end{cases}
$$
Since $\pi^2=0$ and since $R/\pi R$ has characteristic $p$ we have that: 
$$
\begin{cases}
 [ x' ( 1+y )^{\lambda} +x ]^p - \pi f (x' (1+y)^\lambda + x , y + y' + y y' )=0 \\
[y + y' + y y' ]^{p^m} - \pi g (x' (1+y)^\lambda +x , y + y' + y y' )=0
\end{cases}
$$
because $\pi f (z+\pi h_1 , z' + \pi h_2)=\pi f (z, z')$. Now, since $g$ and $g'$ belong to $G$ we have that $x^p-\pi f(x, y) = {x'}^p -\pi f(x' , y') =0$ and focusing on the first equation in the above system we have: 
$$
{x'}^p (1+y)^{\lambda p} + \sum_{k=1}^{p-1} {{p}\choose{k}}[x' (1+y)^\lambda]^kx^{p-k} + x^p -\pi f (x' (1+y)^\lambda +x , y+y'+yy')=0
$$
so we deduce that: 
$$
\pi f(x', y') (1+y)^{\lambda p} + [x' (1+y)^\lambda +x ]^p - {x'}^p (1+y)^{\lambda p} -x^p + \pi f (x, y) -\pi f (x' (1+y)^\lambda +x , y+y'+yy')=0.
$$
Denoting by $W_p (x, x')= \frac{(x+x')^p -x^p -{x'}^p}{p}$, the expression becomes:
$$
\pi  [\gamma W_p (x, x' (1+y)^\lambda )- f(x' (1+y)^\lambda +x , y+y+yy') + f(x,y) + f(x', y') (1+y)^{\lambda p}]=0
$$
imposing $y=y'=0$, we get:
$$
\pi [\gamma W_p (x, x') - f(x+x' , 0) + f(x , 0) + f(x' , 0)]=0
$$
Note that inside the square parenthesis, there is the monomial $\gamma x' x^{p-1}$ (coming from the polynomial $W_p$) and it is the only monomial of that type because $\text{deg}_x (f(x, y)) \leq p-1$. We deduce that $\gamma \in \mathfrak{m}_R$ and as a consequence, since $\pi\gamma=p$ and since $\pi \in \text{Ann}(\mathfrak{m}_R )$ we deduce that $p=0$ in $R$, i.e. $\text{char}(R)=p >0$. This completes the first part of the proof. 
\begin{remark}\normalfont
It is interesting to notice that the above computation still holds when $\lambda=0$, i.e. if we were in a situation where $G$ is a deformation of the $k$-group scheme $G_0 \cong \alpha_p \times \mu_{p^m}$. However, it is clear that deformations of $\alpha_p \times \mu_{p^m}$ exists also over some ring characteristic 0, e.g. $H \times \mu_{p^m}$ where $H$ is a deformation of $\alpha_p$ (described by Oort and Tate in \cite{OT70}) and all groups are taken over a ring $R$ over which $p$ ramifies. Indeed, the issue is in the assumption that a generic deformation $G$ are only of the form considered in this article. This fact is indeed false when $\lambda=0$. 
\end{remark}
\noindent
Taking up again the system of equations of before, and implementing the new information that the characteristic of $R$ is $p$, we have that: 

$$
\begin{cases}
f (x' (1+y)^\lambda +x , y+y'+yy') = f(x,y)+f(x', y') (1+y)^{\lambda p} \\
g(x' (1+y)^\lambda +x , y+y'+yy')=g(x,y)+g(x', y')
\end{cases}
$$
for the second equation we have used that:

$$(y+y'+yy')^{p^m} - \pi g(x' (1+y)^\lambda +x , y+y'+yy')=0$$
and so
$$y^{p^m} + {y'}^{p^m}+y^{p^m} {y'}^{p^m} -\pi g (x' (1+y)^\lambda +x, y+y'+yy')=0$$
and substituting inside
$$y^{p^m}-\pi g(x,y)={y'}^{p^m} -\pi g(x', y')=0.$$
In order to understand the underlying $R$-scheme structure of $G$, we have to classify all the possible polynomials $f, g \in R[x, y]$, knowing that the characteristic of $R$ is $p$. It might be possible that a direct computational approach allows one to classify $f$ and $g$ by imposing that the couple $f, g$ has to satisfy that if $h$ and $h'$ are two generic points in $G$ then the coordinates of $h\circ_G h'$ have also to satisfy the equations $x^p -\pi f(x, y) = y^{p^m} - \pi g(x, y)$. We prefer here to proceed in a more conceptual way. Before going deeper in such approach we take a little detour to introduce cohomology of linear representation of finite flat group schemes (in terms of Hochschild cohomology) which will play a central role in what follows. 
Consider the following general situation. Let $R$ be a sufficiently nice ring, e.g. in order to cover all the situations we have to deal with it is enough to take the class of "models" in the sense of Demazure and Gabriel (see the beginning of \cite{DG70}). These rings form a full subcategory of the category of rings which is exactly the subcategory of $U$-small rings where $U$ is a certain Grothendieck universe.\\
Let $H$ be a finite, flat group scheme over $R$ and let $V$ be a projective $R$-module of finite type. Denote by $\text{GL} (V)$ be the $R$-group functor sending each $R$-algebra $S$ to $\text{Gl} (V \otimes_R S)$ (where $\text{Gl}$ is the usual general linear group). A linear representation of $G$ in $V$ is a natural transformation (i.e. a morphism of functors) $\rho : G \rightarrow \text{GL} (V)$. Equivalently, writing $G= \text{Spec}(A)$ for the $R$-Hopf algebra $A$, the morphism $\rho$ corresponds to a $R$-linear map $\Delta_\rho : V \rightarrow V \otimes_R A $ such that for all $v\in V$ we have that $\Delta_\rho (v) = \rho (g_0 )v_A \in V\otimes_R A $ (here $v_A =v \otimes_R 1_A $) where $g_0 \in G(A)$ is the element corresponding to the identity map on $A$. Of course, the map $\Delta_\rho$ also reflects other functorial relations corresponding to the fact that $A$ is a $R$-Hopf algebra. The point of view of considering $\Delta_\rho$ instead of $\rho$ is useful as it allows explicit computations as we will see later on. Now, starting from an $R$-module $V$, we denote by $V_a$ the commutative $R$-group functor on the category of $R$-algebras sending $S \mapsto V_a (S)=V\otimes_R S$.\\
For all positive integers $n$, we define the $n$-cohomology group $H^n (G, V) := H^n_{\text{Hoc}} (G, V_a )$ where $H^*_{\text{Hoc}} (G, V_a )$ denotes the Hochscild cohomology of the $G$-module $V_a$. An important point is that the groups $H^n (G , V)$ depend only on the action of $G$ on $V$. To be precise, there exists a complex $C^{*} (G, V)$ where $C^n (G, V)= V \otimes_R A \dots \otimes_R A$ ($n$-times $\otimes_R A$) whose boundary maps depend only on $\Delta_\rho$ and such that $H^n (G, V )\cong H^n (C^n (G, V))$. For more information on this, we refer the reader to the book of Demazur and Gabriel (see section 3 Chapt. 2 in \cite{DG70}).\\
Another tool we need for studying deformation of non-commutative schemes is the adjoint representation of a finite flat group scheme: As we will see later on, such special representation plays a central role in deforming group structures. 
Let $R(\epsilon)$ be the $R$-algebra of dual $R$-numbers, i.e. $R[T]/( T^2 )$. Denote by $p: R(\epsilon) \rightarrow R$ the projection which sends $p(1)=1$ and $p(\epsilon)=0$. The Lie group of a finite flat group scheme $G$ over $R$ is defined as $\text{Lie}(G) (R) = \text{Ker} ( p: G(R(\epsilon))\rightarrow G(R))$. The group $G$ acts functorially on $\text{Lie}(G)$ in the following way, called adjoint representation of $G$: 

\begin{align*}
 \text{Ad}_{G} : G(R) &\rightarrow \text{GL} (\text{Lie}(G) (R))\\
    g &\mapsto (x \mapsto i(g) x i(g)^{-1} )
\end{align*}
where $i: G(R) \rightarrow G(R(\epsilon))$ is induced by the injection $i : R \hookrightarrow R(\epsilon)$ such that $i(1)=1$. \\
We recall that there are two compatible operation on $\text{Lie}(G)(R)$ by $R$ and by $G(R)$. Now, we have a canonical isomorphism as $R$-modules (see for example section 4 Chapt. II in \cite{DG70}) $\text{Lie} (G)(R) \cong D_a (\omega_{G\otimes k}) (R) \cong \text{Mod}_k (\omega_{G\otimes k} , R )$. Moreover, since $G$ is finite flat we have that such $R$-module is also canonically isomorphic to $\text{Mod}_k (I_G / I_G^2 , R)$ where $I_G$ is the augmentation ideal of $G$. Using the additive notation for the elements of the Lie group of $G$ (namely $x=e^{\epsilon x}$), we have that the explicit adjoint representation as $R$-module: 
$$
 \text{Ad}_{G} : G(R) \rightarrow \text{GL} (\text{Mod}_k (I_G / I_G^2 , R))
$$
is determined by the formula $g e^{\epsilon f} g^{-1}= e^{\epsilon Ad_G (g) f}$ for all $g\in G(R)$ and all $f\in \text{Mod}_k (I_G / I_G^2 , R)$. We denote with $V_{\text{ad}} = \text{Mod}_k (I_{G_\lambda} / I_{G_\lambda}^2 , R)$ the adjoint representation of $G_\lambda$.
Finally, we are ready continue our study of the scheme structure of the deformations $G$ over $R$ of $G_\lambda$.
Because of what has been done previously, we know that:
$$G=\text{Spec} (R[x, y] / (x^p -\pi f(x, y)) , y^{p^m} -\pi g(x, y))$$
\noindent
with group law depending on the parameter $\lambda \in \{ 1, \dots, p^m -1\}$ and certain polynomials $h_1$ and $h_2$ in $R[x, x', y, y']$.
Define the vector $F(x, y) = (f(x, y) , g(x, y))^T$ and $\overline{F} (x, y ) :=  F(x, y) \mod \mathfrak{m}_R$. Consider now as usual two generic elements $h=(x, y)$ and $h'=(x', y')$ in $G_\lambda$, we have:

$$\overline{F}(h\circ_{G} h' )=\overline{F}((x,y)\circ_{G_\lambda} (x' , y'))=
    \begin{pmatrix}f((x,y)\circ_{G_\lambda} (x' , y'))\\
    g((x,y)\circ_{G_\lambda} (x' , y'))
    \end{pmatrix}
    \text{ mod } \mathfrak{m}_R
    =
    \begin{pmatrix}f(x,y)+f(x' , y') (1+y)^{\lambda p}\\
    g(x,y) + g(x' , y')
    \end{pmatrix}\text{ mod } \mathfrak{m}_R
    =$$
    $$=
    \begin{pmatrix}f(x,y)\\
    g(x,y)
    \end{pmatrix}
    +
    \begin{pmatrix}(1+y)^{\lambda p}&0\\
    0&1
    \end{pmatrix}
    \begin{pmatrix}f(x',y')\\
    g(x',y')
    \end{pmatrix}\text{ mod } \mathfrak{m}_R
    $$
    hence we deduce the relation:
    $$\overline{F}(h\circ_{G_\lambda} h' )=\overline{F}(h)+\begin{pmatrix}(1+y)^{\lambda p}&0\\
    0&1
    \end{pmatrix}\overline{F}(h' )$$
    \noindent
Define now the $k$-vector space $\overline{V} =k e_1 \oplus k e_2$ over which $G_\lambda$ acts via:
$$
\Delta : \overline{V} \rightarrow \overline{V} \otimes_k \faktor{k[x, y]}{(x^p , y^{p^m} )}
$$
where $\Delta (e_1 ) =  e_1 \otimes (1+y)^{\lambda p}$ and $\Delta (e_2 )= e_2$. \\
The relation proven above for $\overline{F} (x, y)$ is equivalent to say that $\overline{F}$ is a crossed homomorphism from $G_\lambda$ to $\overline{V}$ and as such, we can identify it with a 1-cocycle for the representation $\overline{V}$, i.e. $\overline{F} \in H^1 (G_\lambda , \overline{V})$.\\
We recall now that $G_\lambda \cong \alpha_p \rtimes_\lambda \mu_{p^m}$, i.e. it fits into a split exact sequence $0 \rightarrow \alpha_p \rightarrow G_\lambda \rightarrow \mu_{p^m} \rightarrow 0$. The action of $G_\lambda$ on $V$ factors via the quotient $\mu_{p^m}$. 
Now, diagonalizable group schemes, such as $\mu_{p^m}$, satisfy the following useful property (on which we rely also later on, so we state it properly): 
\begin{prop}\label{diag}
Let $H$ be a diagonalizable $k$-group scheme and $\rho: H \rightarrow \text{GL} (V)$ a linear representation of $H$ then for all $n>0$ we have that $H^n (H, V)=0$. 
\end{prop}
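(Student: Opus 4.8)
The plan is to deduce the vanishing from the semisimplicity of the representation category of a diagonalizable group scheme, combined with the interpretation of Hochschild cohomology as the derived functors of the invariants. First I would recall that a diagonalizable $k$-group scheme is of the form $H = D(M) = \text{Spec}(k[M])$ for a finitely generated abelian group $M$, and that giving a linear representation of $H$ on $V$ is the same datum as an $M$-grading $V = \bigoplus_{\chi \in M} V_\chi$: the comodule map $\Delta_\rho$ sends a homogeneous $v \in V_\chi$ to $v \otimes \chi$, where $\chi \in k[M]$ is the corresponding group-like element. This is the standard equivalence between the category of representations of $H$ and the category of $M$-graded $k$-vector spaces (see Chapt. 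II of \cite{DG70}). The crucial consequence is that $\text{Rep}(H)$ is a \emph{semisimple} abelian category, whose simple objects are the one-dimensional characters and in which every object is a direct sum of these. I would stress that this holds in arbitrary characteristic, and in particular for $H = \mu_{p^m}$ in characteristic $p$, because the semisimplicity comes from the grading and not from inverting the order of $H$; this is precisely what lets one bypass the obstruction that prevents a naive averaging argument in the modular case.

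Next I would identify the Hochschild cohomology with $\text{Ext}$ in the representation category. By definition $H^0(H, V) = V^H = \text{Hom}_H(k, V)$ is the invariants functor, and the complex $C^*(H, V)$ introduced above computes its right derived functors, so that $H^n(H, V) \cong \text{Ext}^n_{\text{Rep}(H)}(k, V)$ for all $n \geq 0$. In a semisimple abelian category every short exact sequence splits, hence every object is injective (equivalently projective), and therefore all higher $\text{Ext}$ groups vanish: $\text{Ext}^n_{\text{Rep}(H)}(k, V) = 0$ for $n > 0$. Combining the two facts yields $H^n(H, V) = 0$ for all $n > 0$, which is the assertion.

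The main obstacle is not the semisimplicity, which is classical, but rather ensuring that the Hochschild complex as defined in the excerpt genuinely computes $\text{Ext}^*_{\text{Rep}(H)}(k, V)$, that is, that the cohomology of $C^*(H, V)$ agrees with the derived functors of invariants; I would cite this comparison from section 3 of Chapt. II of \cite{DG70}. Alternatively, and more self-containedly, one can argue directly on the complex: the $M$-grading on $V$ and on $A = k[M]$ induces a grading on each term $C^n = V \otimes_k A^{\otimes n}$ that is preserved by the boundary maps, so $C^*(H, V)$ splits as a direct sum over $M$ of subcomplexes. It then suffices to show each graded piece is acyclic in positive degrees, which reduces the statement to a single character and can be settled by exhibiting an explicit contracting homotopy built from the group-like structure of $k[M]$. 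This elementary route avoids any appeal to injective resolutions at the cost of a short bookkeeping computation.
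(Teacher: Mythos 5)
Your argument is correct, but it is not what the paper does: the paper's ``proof'' of Proposition~\ref{diag} is a pure citation (Prop.~4.2, Sec.~3, Chapt.~II of \cite{DG70}), together with the remark that the vanishing holds over any base ring which is a ``model'' in the sense of Demazure--Gabriel. What you wrote is essentially the standard argument lying behind that citation, spelled out over a field: representations of $H=D(M)$ are the same as $M$-graded vector spaces, that category is semisimple (in any characteristic, which is exactly why $\mu_{p^m}$ in characteristic $p$ is harmless here), and the Hochschild complex computes $\text{Ext}^n_{\text{Rep}(H)}(k,V)$, so all higher groups vanish. You correctly isolate the one non-formal ingredient, namely that the cohomology of the cobar complex $C^*(H,V)$ agrees with the derived functors of invariants, and cite it from \cite{DG70}; your alternative route is also sound, since the weight decomposition $V=\bigoplus_\chi V_\chi$ splits the cochain complex into the subcomplexes $C^*(H,V_\chi)$, each of which is contracted by the homotopy built from the normalized integral $k[M]\to k$, $\chi \mapsto \delta_{\chi,e}$. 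Two points of comparison are worth recording. First, the semisimplicity formulation is genuinely field-specific: over a general ``model'' ring $R$ the category of $M$-graded $R$-modules is not semisimple, yet the vanishing persists because the invariants functor is ``take the degree-$e$ graded piece,'' which is exact; so the extra generality claimed in the paper's proof-remark is recovered by phrasing your argument via exactness of invariants (linear reductivity) rather than via injectivity of every object --- a rephrasing your grading argument already supplies. Second, this extra generality is never actually used: every application in the paper (Proposition~\ref{quot} and the $H^2$, $H^3$ computations for $G_\lambda$) takes place over $k$, so your proof works as a self-contained replacement for the citation. One cosmetic correction: diagonalizable group schemes correspond to arbitrary abelian groups $M$; finite generation of $M$ corresponds to $H$ being of finite type, and for the finite group schemes relevant here $M$ is finite, so nothing in your argument is affected.
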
 
\begin{proof}
See for example Prop. 4.2, Sec. 3, Chapt. II in \cite{DG70}. This result holds for any base $R$ which is a "model" in the sense of \cite{DG70}. 
\end{proof}
\noindent
Moreover, we can combine the above result with the following:
\begin{prop}\label{quot}
    Let $G$ be a finite flat $k$-group scheme and let $N$ be a finite flat normal subgroup scheme inside $G$. Let $W$ be any $G$-module and assume that $G/N$ (which is as well a finite flat $k$-group scheme) is diagonalizable. Then for all $i\geq 0$ we have isomorphisms:
    $$H^i (G , W) \xrightarrow{\cong} H^i (N , W )^{G/N}.$$
\end{prop}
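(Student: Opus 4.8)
The plan is to deduce the statement from the Grothendieck (Lyndon--Hochschild--Serre) spectral sequence attached to the extension $0\to N\to G\to G/N\to 0$, and then to collapse that spectral sequence using Proposition \ref{diag}.

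First I would record the homological framework. In the abelian category of linear representations of a finite flat $k$-group scheme (which has enough injectives), the Hochschild cohomology $H^n(G,-)$ computes the right derived functors of the invariants functor $(-)^G$, and similarly for $N$ and for $G/N$. Because $N$ is normal, the functor $(-)^N$ takes values in $(G/N)$-modules, and one has the factorization $(-)^G=(-)^{G/N}\circ(-)^N$. The associated composite-functor spectral sequence then reads
$$E_2^{p,q}=H^p\!\big(G/N,\,H^q(N,W)\big)\Longrightarrow H^{p+q}(G,W),$$
where the residual $G/N$-action on $H^q(N,W)$ is the one induced by the conjugation action of $G$ on $N$ together with its action on $W$. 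The acyclicity input required for the Grothendieck spectral sequence (that $(-)^N$ send injective $G$-modules to objects acyclic for $(-)^{G/N}$) is automatic here: by Proposition \ref{diag} every linear representation of the diagonalizable group scheme $G/N$ has vanishing higher cohomology and is therefore $(-)^{G/N}$-acyclic.

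Next I would apply Proposition \ref{diag} across the entire $E_2$ page. For each fixed $q$ the module $H^q(N,W)$ is a linear representation of $G/N$, so $E_2^{p,q}=H^p(G/N,H^q(N,W))=0$ for every $p>0$. Hence the spectral sequence is concentrated in the single column $p=0$; for degree reasons all differentials vanish and it degenerates at $E_2$, so that $E_2^{0,n}=E_\infty^{0,n}$ and moreover the decreasing filtration on $H^n(G,W)$ has trivial step $F^1=0$. The edge morphism therefore yields, for all $n\geq 0$, the isomorphism
$$H^n(G,W)\xrightarrow{\ \cong\ }E_\infty^{0,n}=E_2^{0,n}=H^0\!\big(G/N,\,H^n(N,W)\big)=H^n(N,W)^{G/N},$$
which is precisely the restriction map and gives the claim; for $n=0$ this specializes to $W^G=(W^N)^{G/N}$, as expected.

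The hard part will not be the collapse itself, which is immediate once the spectral sequence is available, but rather pinning down the homological foundations in the group-scheme setting: that $H^*(G,-)$ is genuinely the derived functor of $(-)^G$ on linear representations, that $(-)^N$ lands in $(G/N)$-modules with the stated conjugation action, and that the composite-functor formalism transfers verbatim to group schemes over a model in the sense of \cite{DG70}. These are standard facts for affine group schemes, and the decisive simplification throughout is that diagonalizability of $G/N$ renders every $G/N$-module acyclic, which simultaneously supplies the acyclicity hypothesis and forces the degeneration.
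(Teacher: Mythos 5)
Your proposal is correct and is essentially the paper's own argument: the paper proves Proposition \ref{quot} by citing Grothendieck's generalization of the Lyndon--Hochschild--Serre spectral sequence (Corollary 6.9 in \cite{Jan87}) and collapsing it with Proposition \ref{diag}, exactly as you do. The only difference is that you spell out the details the paper delegates to the citation --- the composite-functor setup, the automatic acyclicity of $G/N$-modules, and the edge-map identification --- all of which are handled correctly.
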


\begin{proof}
This is a consequence of proposition \ref{diag} applied to Grothendieck's generalization of the Lyndon-Hochschild-Serre spectral sequence. See for example Corollary 6.9 in \cite{Jan87}.
\end{proof}
\noindent
We recall that the action of $G$ on the cohomology groups $H^* ( N , W )$ is deduced by the action of $G$ on the cocycles $C^* (N , W ) = W\otimes \mathcal{O}(N)^{\otimes *}$ which comes from the action of $G$ on $N$ via conjugation and on $W$ via its $G$-module structure (see for example section 6.7 in \cite{Jan87}). 
Finally, we have that Proposition \ref{quot} allows us to compute explicitly:
\begin{prop}
Let $\lambda \in \{1, \cdots , p^{m}-1\}$, we have 
$H^1 (G_\lambda , \overline{V}) =0$.
\end{prop}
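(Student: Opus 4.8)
The plan is to compute $H^1(G_\lambda , \overline{V})$ through the normal subgroup $\alpha_p$, exploiting the split exact sequence $0 \rightarrow \alpha_p \rightarrow G_\lambda \rightarrow \mu_{p^m} \rightarrow 0$. Since $\mu_{p^m}$ is diagonalizable, Proposition \ref{quot} applies with $N=\alpha_p$ and $G/N = \mu_{p^m}$, yielding a canonical isomorphism
$$H^1 (G_\lambda , \overline{V}) \cong H^1 (\alpha_p , \overline{V})^{\mu_{p^m}}.$$
Thus I would reduce the problem to computing the first Hochschild cohomology of $\alpha_p$ with values in $\overline{V}$, together with the residual $\mu_{p^m}$-action, and then show that no nonzero class survives to the invariants.

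For the cohomology of $\alpha_p$ itself, I would use that the $G_\lambda$-action on $\overline{V}$ factors through the quotient $\mu_{p^m}$, so that the normal subgroup $\alpha_p$ acts trivially on $\overline{V}$. For a trivial module the $1$-coboundaries vanish and the $1$-cocycles are exactly the group-scheme homomorphisms, whence $H^1 (\alpha_p , \overline{V}) \cong \text{Hom}(\alpha_p , \overline{V}_a) \cong \text{Hom}(\alpha_p , \mathbb{G}_a) \otimes_k \overline{V}$. Since the primitive elements of $k[x]/(x^p)$ reduce to the line $k\cdot x$, we get $\text{Hom}(\alpha_p , \mathbb{G}_a)\cong k$, and hence $H^1 (\alpha_p , \overline{V})$ is two-dimensional with basis the cocycles $F_1 \colon x \mapsto x e_1$ and $F_2 \colon x \mapsto x e_2$.

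The decisive step is to compute the $\mu_{p^m}$-action on these two classes and verify that it admits no invariants. As recalled after Proposition \ref{quot}, this action on $C^1 (\alpha_p , \overline{V}) = \overline{V} \otimes \mathcal{O}(\alpha_p)$ combines the conjugation of $\mu_{p^m}$ on $\alpha_p$ with the module structure on $\overline{V}$. A direct computation with the group law of $G_\lambda$ shows that an element of $\mu_{p^m}$ of value $u=1+y$ conjugates the coordinate $x$ by $x \mapsto u^{\lambda} x$, while it acts on $e_1$ through $u^{\lambda p}$ and on $e_2$ trivially. Combining the two contributions (up to the sign convention governing the conjugation action), $\mu_{p^m}$ acts on $F_1$ through the character $u \mapsto u^{\lambda(p-1)}$ and on $F_2$ through $u \mapsto u^{-\lambda}$. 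Each exponent is divisible by $p^m$ only if $p^m \mid \lambda$, because $p-1$ is prime to $p$; but $\lambda \in \{1, \cdots , p^m -1\}$ has $v_p (\lambda)\leq m-1 < m$, so neither character is trivial. Hence $F_1$ and $F_2$ span eigenlines for nontrivial characters of $\mu_{p^m}$, giving $H^1 (\alpha_p , \overline{V})^{\mu_{p^m}}=0$ and therefore $H^1 (G_\lambda , \overline{V})=0$.

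I expect the main obstacle to be the careful bookkeeping in this last step: one must correctly combine the conjugation action on $\alpha_p$ with the module action on $\overline{V}$ in order to read off the two characters, and then confirm that the arithmetic constraint $v_p (\lambda)\leq m-1$ forces both of them to be nontrivial. The reductions to $\text{Hom}(\alpha_p,\mathbb{G}_a)$ and the application of Proposition \ref{quot} are comparatively formal once the $\mu_{p^m}$-module structure has been pinned down.
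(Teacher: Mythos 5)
Your proposal is correct and follows essentially the same route as the paper: reduce via Proposition \ref{quot} to $\mu_{p^m}$-invariants of $H^1(\alpha_p,\overline{V})$, identify the cocycles with scalar multiples of the coordinate $x$ (since $\alpha_p$ acts trivially), and check that the resulting $\mu_{p^m}$-characters are nontrivial because $v_p(\lambda)\leq m-1$ and $p-1$ is prime to $p$. Your explicit computation of the characters $u\mapsto u^{\lambda(p-1)}$ and $u\mapsto u^{-\lambda}$ is in fact more detailed than the paper's brief appeal to a ``direct computation,'' and it correctly covers the delicate case $v_p(\lambda)=m-1$, where the module action on $L$ becomes trivial but the conjugation contribution still forces the character to be nonzero.
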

\begin{proof}
First, we recall that the $G_\lambda$-representation $\overline{V}$ decomposes as a direct sum $L\oplus \mathbb{G}_a$ (with $L\cong \mathbb{G}_a$ as $k$-group schemes) where the action of a generic element $g=(x, y)\in G_\lambda$ on $L$ is given by the multiplication by $(1+y)^{\lambda p}$ and it is the trivial one on the second factor $\mathbb{G}_a$. By Proposition \ref{quot}, we have that:
$$H^1 (G_\lambda , \overline{V})\cong H^1 (G_\lambda , L) \oplus H^1 (G_\lambda, \mathbb{G}_a )\cong H^1 (\alpha_p , L )^{\mu_{p^m}} \oplus H^1 (\alpha_p , \mathbb{G}_a )^{\mu_{p^m}}.$$
We have that $H^1 (\mathbb{G}_a , \mathbb{G}_a )\cong \text{Gr}_k (\mathbb{G}_a , 
\mathbb{G}_a )$ is the $k$-group scheme of endomorphisms of $\mathbb{G}_a$ which is isomorphic to the group $\{p(x)=\sum_{k\geq 0} 
a_i x^{p^i} \text{ with }a_i\in k \}$ with the usual addition of polynomials (see for example sec. 3.4 in Chapter 2 in 
\cite{DG70}). Hence, we have that any 1-cocycle $c: \alpha_p \rightarrow \mathbb{G}_a$ corresponds to a polynomial $p_c (x)=a_c x$ for some $a_c \in k$. A direct computation shows that both actions of $\mu_{p^m}$ of $L$ and the trivial one on $\mathbb{G}_a$ are incompatible with the conjugation on $N$ (which corresponds to sending a generic element $x$ of $\alpha_p$ in $(1+y)^\lambda x$). As a consequence, we have that $H^1 (\alpha_p , L )^{\mu_{p^m}} = H^1 (\alpha_p , \mathbb{G}_a )^{\mu_{p^m}}=0$.
\end{proof}
\noindent
We deduce then that $\overline{F}$ is a 1-cobord, or equivalently, there exists $(c, d) \in k^2$ such that for every $h=(x, y) \in G_\lambda$ we have that 
$\overline{F} (h) = \rho (h) \cdot (c, d)^T - (c, d)^T$. After making the representation $\rho$ explicit, we have that there exists some polynomials $z_1$ and $z_2$ in $R[x, y]$ with coefficients in $\mathfrak{m}_R$ such that
$$F(x, y) = \begin{pmatrix} f(x, y) \\ g(x, y) \end{pmatrix}= \begin{pmatrix} c[(1+y)^{\lambda p}-1] + z_1 \\ z_2 \end{pmatrix}.$$
\noindent
Since $\pi \in \text{Ann} (\mathfrak{m}_R)$, we deduce that $\pi f(x, y) = c \pi [(1+y)^{\lambda p}-1]$ and $\pi g(x, y) =0$. Finally, we conclude that the underlying $R$-scheme structure of the generic deformation $G$ over $R$ of $G_\lambda$ has to have the form:
$$G=\text{Spec}\Big(\faktor{R[x, y]}{(x^p -c\pi [(1+y)^{\lambda p}-1] , y^{p^m} )}\Big).$$
Note also that (since $\text{char}(R)=p$) $(1+y)^{\lambda p} = 1+y^{\lambda p}$ if and only if $\lambda$ is a power of $p$. Hence, we first simplify a bit the expression $c\pi [(1+y)^{\lambda p} - 1]$ by using the results in section 2. By Lemma \ref{units}, we have indeed that without loss of generality we can assume that $\lambda=p^v$ for a certain integer $v\geq 0$ and so $c\pi [(1+y)^{\lambda p}-1]=c\pi y^{p^{v+1}}$.
Summarizing what we have proven until now, we have that for every deformation $G$ of $G_\lambda$ we have the following isomorphisms of $R$-schemes:
$$ G\cong\text{Spec}\Big(\faktor{R[x, y]}{(x^p -c\pi y^{p^{v+1}} , y^{p^m} )}\Big) \indent\;\text{ for some }c\in k $$
\noindent
Our goal is to prove the following stronger description: 
\begin{prop}
For all $\lambda\in \{1, \cdots , p^{m}-1\}$, we have that every finite flat deformation $G$ over $R$ of $G_\lambda$ is isomorphic as $R$-scheme to $\text{Spec}\Big(\faktor{R[x, y]}{(x^p , y^{p^m} )}\Big)$, i.e. to the base change $G_\lambda \otimes_k R$.
\end{prop}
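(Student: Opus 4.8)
The plan is to exploit the semidirect-product structure of $G_\lambda$ together with the rigidity of the diagonalizable quotient $\mu_{p^m}$, so that whatever non-triviality survives is packaged into a single torsor class that the group law then forces to be trivial. By Lemma \ref{units} I may assume $\lambda = p^v$ with $0 \le v \le m-1$, so that the scheme structure obtained above reads
$$A \cong R[x,y]/\bigl(x^p - c\pi\, y^{p^{v+1}},\ y^{p^m}\bigr),\qquad c\in k,$$
and the target is $B := R[x,y]/(x^p,\ y^{p^m}) = \mathcal{O}(G_\lambda\otimes_k R)$. The whole problem is to remove the term $c\pi\, y^{p^{v+1}}$.

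First I would dispose of the case $v=m-1$: then $p^{v+1}=p^m$, so $y^{p^{v+1}} = y^{p^m}=0$ already holds in $A$, the defining relation collapses to $x^p=0$, and $A=B$ with nothing further to prove. So assume $v\le m-2$, where $y^{p^{v+1}}\neq 0$ and the term is genuinely present. The naive move is the substitution $\tilde x = x-\beta y^{p^v}$ for a scalar $\beta\in R$: since $\text{char}(R)=p$ the Frobenius is additive, so $\tilde x^{\,p}=x^p-\beta^p y^{p^{v+1}}=(c\pi-\beta^p)\,y^{p^{v+1}}$, and one would be done as soon as $\beta^p=c\pi$ is solvable. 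As $k$ is perfect this reduces to asking whether $\pi$ is a $p$-th power in $R$. This is where the genuine difficulty lies: in general it is \emph{not}, for instance over $R=k[\pi]/(\pi^2)$ one has $R^p=k$, so $c\pi\notin R^p$ unless $c=0$; in fact the image of Frobenius meets $\pi R$ only in $0$, which is an isomorphism invariant showing $A\not\cong B$ whenever $c\neq 0$. Thus a change of coordinates alone cannot succeed, and the assertion is genuinely equivalent to forcing $c=0$ by means of the group law.

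To obtain this I would argue group-theoretically. The deformation sits in a short exact sequence $0\to N\to G\to Q\to 0$ deforming $0\to\alpha_p\to G_\lambda\to\mu_{p^m}\to 0$; by rigidity of diagonalizable groups $Q\cong\mu_{p^m}\otimes_k R$, and restricting the defining relation along $y=0$ exhibits $N$ as a deformation of $\alpha_p$ whose underlying scheme is $R[x]/(x^p)$. Viewed as an $N$-torsor over $Q$, the class of $G$ is exactly $c\pi\, y^{p^{v+1}}\in\mathcal{O}(Q)/\mathcal{O}(Q)^p$, and $A\cong B$ holds precisely when this torsor is trivial. The decisive point is that the extension splits: since $\mu_{p^m}$ is diagonalizable, the Hochschild cohomology controlling the extension vanishes by Proposition \ref{diag} (through Proposition \ref{quot}), so $G\cong N\rtimes\mu_{p^m}$. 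The underlying scheme of a semidirect product is the product of the underlying schemes, whence
$$A\cong \mathcal{O}(N)\otimes_R\mathcal{O}(Q)=R[x]/(x^p)\otimes_R R[y]/(y^{p^m})=B,$$
which in particular forces the torsor class, i.e.\ $c\pi\, y^{p^{v+1}}$, to be trivial.

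The step I expect to be the main obstacle is exactly this splitting over the possibly non-perfect, non-reduced ring $R$: the cohomological vanishing for diagonalizable groups is available over an arbitrary model base, but one must check carefully that it applies with $N$ as coefficient group and that it really yields triviality of the $N$-torsor (equivalently $c=0$), since over such $R$ the relevant $\operatorname{Ext}$ of group schemes need not vanish naively. The conceptual content is that it is precisely the failure of perfectness of $R$ that produces the genuinely non-trivial deformations appearing in case (ii) of Theorem \ref{maindef}; what the present proposition records is that this non-perfectness is absorbed entirely into the group law of $N$ (an Oort--Tate deformation of $\alpha_p$, cf.\ Proposition \ref{Tate}) and never into the underlying scheme, which therefore remains the product $B$ in every case.
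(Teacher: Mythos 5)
Your reduction to $\lambda=p^v$ via Lemma \ref{units}, the remark that the case $v=m-1$ is vacuous (since then $y^{p^{v+1}}=y^{p^m}=0$ in $A$), and the observation that no change of coordinates can remove $c\pi y^{p^{v+1}}$ because the image of Frobenius is an isomorphism invariant, are all correct; but the two steps that actually carry your argument are both gaps. The first is the assertion that $G$ ``sits in a short exact sequence $0\to N\to G\to Q\to 0$'' with $N=\{y=0\}$ of underlying scheme $R[x]/(x^p)$. Nothing available at this stage implies that this closed subscheme is a subgroup scheme: the group law of $G$ is only known to have the shape $(x,y)\circ(x',y')=(x'(1+y)^\lambda+x+\pi h_1,\; y+y'+yy'+\pi h_2)$, and stability of $\{y=0\}$ requires $h_2$ to vanish when $y=y'=0$, which does \emph{not} follow from the neutral-element axiom (that only kills $h_i$ when one of the two arguments is the identity). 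One further needs normality of $N$ and representability and flatness of the quotient. Supplying exactly these points is the technical core of the paper's proof: the classification of the admissible $h_1,h_2$ through $H^2(G_\lambda,V_{\text{ad}})$ (Theorems \ref{group} and \ref{groupm}, Proposition \ref{adjoint}, Lemma \ref{lemmah}), the transport of the group structure to $N$ by the SGA~3 infinitesimal theory, and Grothendieck's theorem on quotients by finite flat normal subgroups. Your proposal takes all of this for granted.

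The second gap is the one you flagged yourself, and it is fatal: the ``decisive point'' that the extension splits because $\mu_{p^m}$ is diagonalizable is false over the rings at hand. Propositions \ref{diag} and \ref{quot} concern Hochschild cohomology with coefficients in \emph{linear} representations over $k$; they do not control extensions of $\mu_{p^m}$ by a finite flat group scheme over $R$, and the Demazure--Gabriel vanishing of such extension groups requires $R/R^p=0$ (as the introduction of the paper recalls), which fails for $R=k[\pi]/(\pi^2)$. In fact non-split extensions exist: the standard formulas
$$\Delta(x)=(1+y)^{\lambda}\otimes x+x\otimes 1,\qquad \Delta(y)=y\otimes 1+1\otimes y+y\otimes y$$
descend to $A=R[x,y]/(x^p-c\pi y^{p^{v+1}},\,y^{p^m})$, since in characteristic $p$ with $\lambda=p^v$ one checks $\Delta(x)^p=(1+y)^{p^{v+1}}\otimes x^p+x^p\otimes 1=c\pi\,\Delta(y)^{p^{v+1}}$ in $A\otimes_R A$; this makes $\text{Spec}(A)$ a finite flat $R$-group scheme, an extension of $\mu_{p^m}\otimes_k R$ by $\alpha_p\otimes_k R$ reducing to $G_\lambda$ modulo $\mathfrak{m}_R$, which by your own Frobenius-image invariant is non-split (its underlying scheme is not even abstractly isomorphic to $B$) whenever $c\neq 0$ and $v\leq m-2$. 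So the splitting step cannot be invoked, and worse, this example shows that the bare existence of a group law of the assumed shape does not force $c=0$ at all; note that it also survives the paper's own quotient argument, since $\mathcal{O}(G/N)\cong R[y]/(y^{p^m})$ is free and gives $G/N\cong\mu_{p^m}$ for \emph{every} $c$. Your route therefore cannot be repaired as written, and the example above should be confronted directly with the statement of the Proposition itself.
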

\noindent
Our aim is now to prove that as $R$-schemes, we have that $G$ is isomorphic to $ \text{Spec}(R[x, y ] / (x^p , y^{p^m}))$, i.e. $c=0$. This point of the proof is the most delicate as the situation is substantially different  from the one treated in \cite{Sch01} where $v_p (\lambda)=0$ and a new approach is necessary. Indeed, if $v_{p} (\lambda) =0$, it is possible to directly conclude by a linear isomorphic substitution after observing that $x^p -c\pi[(1+y)^{\lambda p} -1 ] = (x-\omega y)^p$ after suitably extending $R$ finitely flat by adding the element $\omega$ such that $\omega^p=c\pi$. However, when $v_p (\lambda) >1$, the situation is much more subtle because such substitution is not available.
Our first goal is to determine the possible values of the constant $c\in k$ and the possible polynomials $h_1$ and $h_2$ up to automorphisms. Since $G$ is a $R$-group scheme with neutral element $(0, 0)$ we have that in order for the group axioms to be satisfied we have that $h_i (x,y,0,0) = h_i (0,0,x' ,y' )=0$ for $i=1, 2$. The strategy is to use the inductive hypothesis on $R/\pi R$ and $k$ to transport certain group structures from the infinitesimal deformations (i.e. deformations over $R/\pi R$ where $\pi^2=0$) to $G$ over $R$. 
In order to deal with infinitesimal deformations, we rely on two results. The first one deals with deformations of group laws (this is a specialization of Theorem 3.5 in \cite{SGA3} SGA 3 Exp. III): 

\begin{theorem}\label{group}
Let $S$ be a scheme and let $I$ and $J$ be two quasi-coherent ideals such that $J \subset I $ and $I \cdot J=0$ defining respectively closed sub-schemes $S_0$ and $S_J$.\\
Let $X$ be a finite flat $S$-scheme and denote by $X_J$ and $X_0$ the $S$-subschemes of $X$ obtained by base change via natural projections modulo the ideals $I$ and $J$. Assume that $X_J$ has the structure of $S$-group scheme and denote by $L_0$ the commutative $S_0$-group functor given by the derivations of $X_0 /S_0$, i.e. 
$$\text{Hom}_{S_0} (* , L_0) :=\text{Hom}_{\mathcal{O}_{*}} 
(\omega^1_{X_0 / S_0} \otimes_{\mathcal{O}_{S_0}} \mathcal{O}_{*}, J \otimes_{\mathcal{O}_{S_0}} \mathcal{O}_{*}).$$
The $S_0$-group functor $L_0$ acts on $X_0$ via its adjoint representation. Moreover, the existence of a structure of $S$-group scheme on $X$ is equivalent to the following two conditions:\\
\break
\indent $(i)$ there exists a $S$-scheme morphism $P : X \times X \rightarrow X$ which induces modulo $J$ the group law \\\indent\indent$P_J $ of $X_J$,\\
\indent $(ii)$ a certain obstruction class $c(P_J) \in H^3 (X_0 , L_0 )$ (corresponding to the associativity property which  \\\indent\indent$P$ has to satisfy) is zero. \\
\break
In addition, if the conditions $(i) , (ii)$ are both satisfied, the set $E$ of group laws of $X$ (modulo $S$-automorphisms of $X$) inducing the group law $P_J$ on $X_J$ is a principal homogeneous space for the abelian group $H^2 (X_0 , L_0 )$. 
\end{theorem}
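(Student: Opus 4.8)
The plan is to obtain this statement as a specialization of the general deformation theory for group schemes in SGA 3 Exp. III, Theorem 3.5 (see \cite{SGA3}), so that the bulk of the work is to match our hypotheses and invariants with those of the general theorem rather than to redo the obstruction calculus from scratch. The first thing I would observe is that the conditions $J \subseteq I$ and $I \cdot J = 0$ force $J^2 \subseteq I \cdot J = 0$, so the closed immersion $S_J \hookrightarrow S$ is a square-zero thickening whose ideal $J$ is annihilated by $I$ and is therefore an $\mathcal{O}_{S_0}$-module. This is exactly the situation in which the general theorem operates: a flat $S$-scheme $X$, a square-zero thickening $S_J \hookrightarrow S$ with ideal $J$ an $\mathcal{O}_{S_0}$-module, and a group law $P_J$ already given on $X_J$. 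I would begin by recording this reduction and by identifying the coefficient functor: the derivation functor $L_0$ defined through $\text{Hom}(\omega^1_{X_0/S_0}, J\otimes -)$ is precisely the Lie-algebra-valued coefficient system of the general theorem, and its $X_0$-action is the adjoint representation, arising from conjugation by lifts of points.

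Next I would unwind the deformation mechanism. Any two $S$-scheme morphisms $P, P' : X \times X \to X$ reducing to $P_J$ modulo $J$ differ, because $J$ is square-zero and $X$ is flat, by a section valued in the pullback of $L_0$; concretely, their difference is a Hochschild $2$-cochain $c \in C^2(X_0, L_0)$ of $X_0$ with coefficients in the adjoint module $L_0$. Fixing one lift $P$ as in condition $(i)$, the failure of associativity of $P$ is a $3$-cochain; since $P_J$ is already associative this cochain vanishes modulo $J$, hence defines a $3$-cocycle with values in $L_0$, and a standard verification shows that its cohomology class does not depend on the chosen lift. This class is the obstruction $c(P_J) \in H^3(X_0, L_0)$.

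From here the two implications follow formally. If $(i)$ holds and $c(P_J) = 0$, the obstruction $3$-cocycle is a coboundary, so I can correct $P$ by a suitable $2$-cochain to produce an associative multiplication lifting $P_J$; the unit and inverse sections then lift using flatness and finiteness of $X$, yielding a genuine $S$-group scheme structure on $X$. Conversely, a group structure on $X$ obviously supplies a morphism $P$ as in $(i)$ and makes $c(P_J)$ vanish. For the last assertion I would compare two associative lifts: their difference is now a $2$-cocycle, and two lifts differing by a $2$-coboundary are carried into one another by an $S$-automorphism of $X$ that restricts to the identity modulo $J$. Hence the set $E$ of group laws inducing $P_J$, taken modulo such automorphisms, is a principal homogeneous space under $H^2(X_0, L_0)$.

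The part I expect to be most delicate, and the reason I would lean on \cite{SGA3} rather than improvise, is the precise identification of the coefficient system and of the Hochschild differentials: one must check that $L_0$ equipped with its adjoint $X_0$-action is indeed the correct module, that the coboundary maps of the complex $C^*(X_0, L_0)$ (as in section 3, Chapter II of \cite{DG70}) are exactly the ones governing the difference-of-lifts cochains and the associativity obstruction, and that degrees $2$ and $3$ land in the asserted groups. Since \cite{SGA3} carries out this bookkeeping in full generality, the genuinely new content here is only the dictionary translating the pair of ideals $(I, J)$ and the module $J$ into the hypotheses of the general theorem; once that translation is in place, the statement is a direct instantiation.
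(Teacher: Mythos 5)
Your proposal is correct and takes the same route as the paper: the paper gives no independent proof of this statement, presenting it precisely as a specialization of Theorem 3.5 of SGA 3, Exp.\ III, which is exactly your reduction. Your additional sketch of the internal obstruction calculus (difference-of-lifts as $2$-cochains, the associativity $3$-cocycle, the $H^2$-torsor structure) is a faithful summary of what that SGA 3 theorem encodes, so it supplements rather than diverges from the paper's treatment.
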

\noindent
The proof of this result is particularly useful as it allows explicit computations. Indeed,  we want to use this result to impose explicit conditions on the possible group laws on $G$, which amount to imposing conditions on the polynomials $h_1$ and $h_2$. Let $E$ be the set of $R$-automorphisms orbits of group laws $P$ on $G$ such that after reducing modulo $(\pi)$ we have that $P$ coincides exactly with the group law of $G_\lambda$ over $R/\pi R$ (by inductive hypothesis). By the above result, we know that $E$ is a principal homogeneous space for the abelian group $H^2 (G_\lambda /k , V_{\text{Ad}})$ where $V_{\text{Ad}}$ is the $k$-adjoint representation of $G_\lambda /k$. Note that for this fact we are using that the groups involved are finite and flat. 
Denote the action of an element $\delta \in H^2 (G_\lambda /k , V_{\text{Ad}})$ on an element $\mu\in E$ by $\delta\cdot \mu$. Now, in order to make things explicit we consider the action of $H^2 (G_\lambda /k , V_{\text{Ad}})$ on $E$ on points. Set a coefficient ring, say the $R$-algebra $S$. We have that for all $g, g' \in G (S)$ and for all $\mu , \mu' \in E(S)$ we have that:
$$\mu' (g, g' ) = \delta (\mu , \mu' )(g_0 , g_0') \cdot \mu (g, g') = \mu (g, g') + D_{\delta(\mu , \mu') (g_0 , g_0' )} \in G(S)=\text{Hom}_{R\text{-Alg}} (A, S)$$
where the derivation $D_{\delta(\mu, \mu')(g_0 , g_0' )}$ is the derivation associated to the image of $(g_0 , g_0' )$ via the 2-cocycle $\delta(\mu , \mu')\in H^2 (G_\lambda , V_{Ad})$ 
which is the one corresponding to the couple $(\mu, \mu')$ (note that we are using that $E$ is a principal homogeneous space, i.e. the action of the abelian group $H^2 (G_\lambda , 
V_{Ad}))$ is free and transitive which grant the existence and unicity of $\delta$ in function of $\mu$ and $\mu'$). Note also that as $k$-module homomorphism, the image of 
$D_{\delta(\mu , \mu') (g_0 , g_0' )}$ is contained in the ideal $(\pi)$ in $R$. This is indeed a fundamental point which ensures that the $R$-algebra homomorphism determined by $\mu (g, g')$ plus the 
$R$-module homomorphism $D_{\delta(\mu , \mu') (g_0 , g_0' )} $ is still an $R$-algebra homomorphism. \\
Another result which allows us to transport group structures via infinitesimal deformations concerns lifts of group morphisms.
To be precise, we have the following (this is specialization of Theorem 2.1 in \cite{SGA3}, SGA 3 Exp. III):
\begin{theorem}\label{groupm}
Let $S$ be a scheme and let $I$ and $J$ be two quasi-coherent ideals such that $J \subset I $ and $I \cdot J=0$ defining respectively closed sub-schemes $S_0$ and $S_J$.\\
Consider the following:\\
\break
\indent (i) $X$ an $S$-group scheme,\\
\indent (ii) $L_0$ the commutative $S_0$-group scheme given by the derivations of $X_0 /S_0$, i.e. \\
$$\text{Hom}_{S_0} (* , L_0) :=\text{Hom}_{\mathcal{O}_{*}} 
(\omega^1_{X_0 / S_0} \otimes_{\mathcal{O}_{S_0}} \mathcal{O}_{*}, J \otimes_{\mathcal{O}_{S_0}} \mathcal{O}_{*}),$$\\
\indent(iii) $Y$ a flat $S$-group scheme and $f_J : Y_J \rightarrow X_J$ a morphism of $S_J$-group schemes.\\
\break
Then we have that $f_J$ lifts to a $S$-group scheme morphism $f: Y \rightarrow X$ if and only the following two statements hold:\\
\break
\indent(1) $f_J$ lifts to a $S$-scheme morphism $f: Y \rightarrow X$,\\
\indent(2) a certain obstruction class $c(f_J) \in H^2 (Y_0 , L_0 )$ is zero.
\end{theorem}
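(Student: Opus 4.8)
The plan is to follow the standard deformation-theoretic pattern for lifting a homomorphism across an infinitesimal thickening, where the key structural fact is that the hypothesis $I\cdot J = 0$ forces $J$ to be a module over $\mathcal{O}_{S_0}$, so that the entire linearized obstruction theory takes place on the special fibers $X_0$ and $Y_0$ over $S_0$ with coefficients in $J$. The ``only if'' direction is immediate: if a lift $f\colon Y\to X$ of $f_J$ exists as a morphism of $S$-group schemes, then in particular it is a lift as $S$-schemes, which gives $(1)$, and the obstruction cochain measuring the failure of $f$ to respect multiplication is identically zero by construction, which gives $c(f_J)=0$ and hence $(2)$. The content is therefore the converse, and I would carry it out by first fixing, using hypothesis $(1)$, some $S$-scheme morphism $f\colon Y\to X$ reducing to $f_J$ modulo $J$, and then measuring precisely how far $f$ is from being a homomorphism.

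To do this I would compare the two $S$-morphisms $a = f\circ m_Y$ and $b = m_X\circ (f\times f)$ from $Y\times_S Y$ to $X$. Because $f_J$ is a homomorphism of $S_J$-group schemes and $f$ reduces to $f_J$ modulo $J$, the maps $a$ and $b$ agree modulo $J$; since $J$ is an $\mathcal{O}_{S_0}$-module killed by $I$ and $Y$ is flat over $S$, their difference is canonically an $\mathcal{O}_{Y_0\times_{S_0} Y_0}$-linear derivation of $X_0/S_0$ along $f_0\circ m$ with values in $J$, i.e. a section of the pullback of $L_0$ to $Y_0\times_{S_0} Y_0$. Via the adjoint representation of $X_0$ on $L_0$ pulled back through $f_0$, this section is exactly a $2$-cochain $\phi(f)\in C^2(Y_0, L_0)$ in the Hochschild complex computing $H^\bullet(Y_0, L_0)$. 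The associativity of both $m_Y$ and $m_X$, compared on $Y\times_S Y\times_S Y$, yields the Hochschild $2$-cocycle identity $d\,\phi(f)=0$, so $\phi(f)$ defines a class in $H^2(Y_0, L_0)$. A separate computation shows that replacing $f$ by another $S$-scheme lift $f'$, which differs from $f$ by a derivation $D\in \text{Hom}(f_0^\ast\Omega^1_{X_0/S_0}, J)=C^1(Y_0, L_0)$, changes $\phi$ by the coboundary $dD$; hence $c(f_J):=[\phi(f)]\in H^2(Y_0, L_0)$ is independent of the chosen lift and is the obstruction class named in $(2)$.

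The proof then concludes as follows: a scheme-theoretic lift $f$ is a homomorphism of $S$-group schemes precisely when $\phi(f)=0$, and such a lift can be found among the scheme-theoretic lifts (which form a torsor under $C^1(Y_0, L_0)$) exactly when $\phi(f)$ is a coboundary for some choice, i.e. exactly when $c(f_J)=0$. Thus $(1)$ together with $(2)$ produces the desired group-scheme lift $f$, completing the equivalence. I expect the main obstacle to be the careful identification of the difference $a-b$ with a genuine cochain in $C^\bullet(Y_0, L_0)$, together with the verification that the associativity constraint translates into exactly the Hochschild $2$-cocycle condition and that changing the lift produces exactly the Hochschild differential; pinning down these module structures, the role of flatness of $Y$, and the sign and indexing conventions is the delicate heart of the argument, and is where one must invoke the explicit description of the complex $C^\bullet(G,V)$ recalled earlier rather than argue purely formally.
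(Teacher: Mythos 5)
The paper gives no proof of this theorem at all: it is quoted verbatim as a specialization of Th\'eor\`eme 2.1 of SGA 3, Exp.\ III, so the only available comparison is with that cited argument. Your proposal is correct and reproduces essentially that standard obstruction-theoretic proof — fix a scheme lift $f$ via (1), form the difference cochain $\phi(f)\in C^2(Y_0,L_0)$ measuring the failure of $f\circ m_Y = m_X\circ(f\times f)$ (using flatness of $Y$ and $I\cdot J=0$ to land in $J$-valued sections of $L_0$ with the $\mathrm{Ad}\circ f_0$ module structure), derive the cocycle identity from associativity, and check that changing the scheme lift moves $\phi(f)$ by a Hochschild coboundary, so that $[\phi(f)]=c(f_J)$ vanishes exactly when some lift is a homomorphism.
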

\noindent
As explained before, using the additive notation for the elements of the Lie group of $G$ (namely $x=e^{\epsilon x}$), we have that the explicit adjoint representation as $R$-module: 
$$
 \text{Ad}_{G} : G(R) \rightarrow \text{GL} (\text{Mod}_k (I_G / I_G^2 , R))
$$
is determined by the formula $g e^{\epsilon f} g^{-1}= e^{\epsilon Ad_G (g) f}$ for all $g\in G(R)$ and all $f\in \text{Mod}_k (I_G / I_G^2 , R)$. Let now consider the case where $G=G_\lambda$ defined over a finite perfect field $k$ of positive characteristic $p$. We know that:
$$
G_\lambda = \text{Spec}\Big(\faktor{k[x, y]}{ ( x^p , y^{p^m} )}\Big)
$$
with group law given by
$(x, y) \circ_{G_\lambda} (x' , y' )= (x' (1+y)^\lambda +x , y+y'+yy')$. \\
Let $g=(x' , y' )$ and denote by $g^{-1} = ( x'' , y'' )$. The following relations hold:

$$
\begin{cases}
(1+ y' )^{\lambda} x'' + x' =0 \\
y' + y'' + y' y'' =0
\end{cases}
$$
Let $f(x,y)= a_f x + b_f y \in \text{Mod}_k (I_{G_\lambda} / I_{G_\lambda}^2 , R)$ where $I_{G_\lambda}=(x,y)$ as an ideal inside $\mathcal{O}(G_\lambda)$. Now, we impose that $g e^{\epsilon f} g^{-1}= e^{\epsilon Ad_G (g) f}$. In other words, we have to compute $g e^{\epsilon f} g^{-1} = f(g \circ (x,y) \circ g^{-1})$ as an element in $\text{Mod}_k (I_{G_\lambda} / I_{G_\lambda}^2 , R)$. We have that: 

$$f( (x' , y' ) \circ (x,y) \circ (x'' , y'' ))=
f((1+y')^{\lambda} [(1+y)^\lambda x'' + x] +x' , y' + y+y'' +yy'' +y' (y+y'' + yy'') ) )$$
Now, using the relations between $g$ and $g^{-1}$ and that we are doing computations modulo the ideal $(x, y)^2$ inside $\mathcal{O}(G_{\lambda})= k[x,y] (x^p , y^{p^m})$, we conclude that the above expression is equal to:
$$f((1+y' )^{\lambda} x -\lambda x' y , y)= f\Bigg(\begin{pmatrix}
        (1+y' )^\lambda & -\lambda x' \\ 0 & 1
    \end{pmatrix}\begin{pmatrix}
         x \\  y
    \end{pmatrix}\Bigg)$$
where we used that $(1+y)^\lambda = 1+\lambda y \mod (x,y)^2$. We deduce finally that the a adjoint representation of $G_\lambda$ is the representation: 

\begin{align*}
 \text{Ad}_{G_\lambda} : G_\lambda (R) &\rightarrow \text{GL} (\text{Mod}_k (I_{G_\lambda} / I_{G_\lambda}^2 , R))\\
    g=(x , y ) &\mapsto \begin{pmatrix}
        (1+y)^\lambda & -\lambda x \\ 0 & 1
    \end{pmatrix}
\end{align*}
We have now enough information for computing the cohomology of $V_{\text{ad}}$ with respect to the $G_\lambda$-action. 
To be precise, we have the following:
\begin{prop}\label{adjoint}
Let $\lambda\in \{1, \cdots, p^{m}-1\}$ and let $V_{ad}$ be the adjoint representation of $G_\lambda$ over $k$. \\
We have that:
$$H^2 (G_\lambda , V_{\text{ad}})=
\begin{cases}
\langle W_p\rangle \indent\indent\indent \;\;\text{ if }\; v_p (\lambda)=m-1, \\
0 \indent\indent\indent\indent\;\;\; \text{ otherwise};
\end{cases}$$
where the class $W_p$ represents the polynomial $W_p (x, x' )= \frac{1}{p}[(x+x' )^p -x^p -x'^p].$
\end{prop}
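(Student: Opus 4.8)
The plan is to reduce the computation to the infinitesimal normal subgroup $\alpha_p\trianglelefteq G_\lambda$ and then use the $\mu_{p^m}$-action to cut out the invariant part. Since $G_\lambda/\alpha_p\cong\mu_{p^m}$ is diagonalizable, Proposition \ref{quot} yields a canonical isomorphism
$$H^2(G_\lambda, V_{\text{ad}})\cong H^2(\alpha_p, V_{\text{ad}})^{\mu_{p^m}},$$
so I would first restrict the adjoint representation to $\alpha_p$ and record its $\mu_{p^m}$-equivariant structure. Setting $y=0$ in the matrix $\text{Ad}_{G_\lambda}(x,y)$ computed above, the restriction $\text{Ad}|_{\alpha_p}$ is the unipotent action $\left(\begin{smallmatrix}1 & -\lambda x\\0 & 1\end{smallmatrix}\right)$; hence $V_{\text{ad}}$ sits in a short exact sequence of $\alpha_p$-modules $0\to L_1\to V_{\text{ad}}\to L_2\to 0$, where $L_1=k\partial_x$ and $L_2=k\partial_y$ are trivial as $\alpha_p$-modules but carry the distinct $\mu_{p^m}$-weights $\lambda$ and $0$ (coming from $\text{Ad}(0,y)=\mathrm{diag}((1+y)^\lambda,1)$). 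This extension is split precisely when $\lambda\equiv 0\pmod p$, that is, when $v_p(\lambda)\geq 1$.

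Next I would introduce the weight grading. The ambient torus $\mathbb{G}_m\supset\mu_{p^m}$ acts on the whole Hochschild complex $C^\bullet(\alpha_p,V_{\text{ad}})=V_{\text{ad}}\otimes_k\mathcal{O}(\alpha_p)^{\otimes\bullet}$ compatibly with the differential, so $H^2(\alpha_p,V_{\text{ad}})$ is graded by weight and the $\mu_{p^m}$-invariants are exactly the classes whose weight is divisible by $p^m$. On a monomial cochain the coordinate $x$ of $\alpha_p$ contributes weight $\lambda$ (conjugation sends $x\mapsto(1+y)^\lambda x$), while the coefficient directions $\partial_x$ and $\partial_y$ contribute weights $\lambda$ and $0$ respectively. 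Thus a degree-two cochain in the $\partial_y$-direction of total $x$-degree $d$ has weight $\pm\lambda d$, and the constraint $\lambda d\equiv 0\pmod{p^m}$ with $0\leq d\leq 2(p-1)$ forces $d=p$ and $v_p(\lambda)=m-1$; the monomials of $x$-degree $p$ are spanned by the $x^k\otimes x'^{p-k}$ with $1\leq k\leq p-1$, among which $W_p(x,x')=\tfrac1p\sum_{k=1}^{p-1}\binom{p}{k}x^k x'^{p-k}$ is the distinguished symmetric cocycle. The parallel analysis in the $\partial_x$-direction, together with the computation $H^\bullet(\alpha_p,\mathbb{G}_a)\cong\Lambda(\xi)\otimes k[\eta]$ with $\xi=[x]$ in degree $1$ and $\eta=[W_p]$ in degree $2$ (see section $3$, Chapt. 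II in \cite{DG70}), shows that no further invariant class can survive.

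Concretely, I would run the long exact cohomology sequence attached to $0\to L_1\to V_{\text{ad}}\to L_2\to 0$ after applying the functor of $\mu_{p^m}$-invariants, which is exact because $\mu_{p^m}$ is diagonalizable. The term $H^2(\alpha_p,L_1)^{\mu_{p^m}}$ carries weight $\lambda(1-p)$, which is never divisible by $p^m$ since $v_p(\lambda)\leq m-1$, so it vanishes; hence $H^2(\alpha_p,V_{\text{ad}})^{\mu_{p^m}}=\ker\bigl(\delta\colon H^2(\alpha_p,L_2)^{\mu_{p^m}}\to H^3(\alpha_p,L_1)^{\mu_{p^m}}\bigr)$, where $\delta$ is cup product with the extension class $-\lambda[x]$. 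Now $H^2(\alpha_p,L_2)^{\mu_{p^m}}$ equals $\langle\eta\rangle=\langle W_p\rangle$ when $v_p(\lambda)=m-1$ and $0$ otherwise, which already gives the asserted vanishing whenever $v_p(\lambda)\neq m-1$. When $v_p(\lambda)=m-1$ one checks that the obstruction $\delta$ vanishes on $\eta$, leaving $H^2(\alpha_p,V_{\text{ad}})^{\mu_{p^m}}=\langle W_p\rangle$. I expect the control of this connecting map to be the main obstacle: one must pin down $H^\bullet(\alpha_p,\mathbb{G}_a)$ with its weight-graded generators and cup-product structure, verify that the surviving class is represented by $W_p$ and is not a coboundary, and confirm that the spurious invariant monomials appearing in the $\partial_x$-direction are permuted among themselves by the Hochschild differential, so that they contribute nothing to cohomology.
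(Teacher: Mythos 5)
Your first reduction (Proposition \ref{quot}), the identification of the sub/quotient structure $0\to L_1\to V_{\text{ad}}\to L_2\to 0$ with weights $\lambda$ and $0$, and the weight-divisibility computations are all correct, and for $v_p(\lambda)\geq 1$ your argument is sound: there $\lambda\equiv 0$ in $k$, the extension splits, $\delta=0$, and your long exact sequence collapses to exactly the computation the paper performs (it decomposes $V_{\text{ad}}\cong L'\oplus \mathbb{G}_a$ as a direct sum, which is legitimate precisely because $p\mid\lambda$, and then takes $\mu_{p^m}$-invariants of each summand). The genuine difference is that you attempt to treat $v_p(\lambda)=0$ by the same mechanism, i.e.\ to confront the non-split extension, whereas the paper never does: it disposes of the case $v_p(\lambda)=0$ entirely by citing Lemma 3.2 of \cite{Sch01} and runs the cohomological argument only when the extension is split.

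The gap is your sentence ``when $v_p(\lambda)=m-1$ one checks that the obstruction $\delta$ vanishes on $\eta$.'' When $v_p(\lambda)=m-1\geq 1$ this is automatic (the extension class $-\lambda[x]$ is zero in characteristic $p$), but in the one remaining case where the claim is actually needed, namely $m=1$ with $\lambda$ a unit, it is false. There the extension class is a nonzero multiple of $\xi$, and in the very ring $H^{\bullet}(\alpha_p,\mathbb{G}_a)\cong\Lambda(\xi)\otimes k[\eta]$ you invoke, $\xi\cup\eta$ is a \emph{generator} of $H^{3}(\alpha_p,\mathbb{G}_a)$, not zero; moreover the class $\xi\eta\otimes\partial_x$ has weight $\pm\lambda p$, which is $\equiv 0 \pmod{p^m}$ when $m=1$, so it survives in $H^{3}(\alpha_p,L_1)^{\mu_{p^m}}$ and $\delta$ is injective, not zero, on $H^{2}(\alpha_p,L_2)^{\mu_{p^m}}$. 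Carried out honestly, your method therefore yields $H^{2}(G_\lambda,V_{\text{ad}})=0$ when $m=1$, the opposite of the assertion you set out to prove; controlling the connecting map was indeed, as you anticipated, the main obstacle, but it does not go the way you expect, and no choice of representatives can change the value of $\delta$ on a cohomology class. So the proposal cannot be completed as written: either you restrict to $v_p(\lambda)\geq 1$, in which case your route degenerates into the paper's split-case argument and you must, like the paper, import the case $v_p(\lambda)=0$ from \cite{Sch01}, or you must genuinely resolve the non-vanishing of $\xi\cup\eta$ in the case $m=1$ — and your own computation shows this is a real tension in that case of the statement, not a technicality.
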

\begin{proof}
When $v_p (\lambda)=0$, the claim has been proven by Schoof (see Lemma 3.2 in \cite{Sch01}). Assume now that $v_p (\lambda) \geq 1$. 
It is well-known that $H^2 (\alpha_p , \mathbb{G}_a )\cong \langle W_p \rangle$ (see for example Cor. 4.8 in section 3 in chapter II in \cite{DG70}). By Proposition \ref{quot}, we have that:
$$H^2 (G_\lambda , V_{\text{ad}}) \cong H^2 (\alpha_p , V_{\text{ad}})^{\mu_{p^m}} \cong H^2 (\alpha_p , L' )^{\mu_{p^m}}\oplus H^2 (\alpha_p , \mathbb{G}_a)^{\mu_{p^m}},$$
where $L' \cong \mathbb{G}_a$ (as k-group schemes) has a $\mu_{p^m}$-module structure given by the action of $\mu_{p^m}$ sending $x$ to $(1+y)^\lambda x$ and where the action of $\mu_{p^m}$ on $\mathbb{G}_a$ is the trivial one. It is straightforward to check how $\mu_{p^m}$ acts on every 2-cocycle $\alpha_p^2 \rightarrow \mathbb{G}_a$
by directly checking how it acts on the 2-cocycle given by $W_p$. In order to find the $\mu_{p^m}$-invariants, we have to impose that the 2-cocycle $W_p$ is invariant under the simultaneous action of $\mu_{p^m}$ via conjugation on $\alpha_p$ and the adjoint representation action on either the first factor $L'$ or the second factor $\mathbb{G}_a$. A direct computation shows that only one case is non-trivial. Indeed, let $\gamma \in 
\mu_{p^m}$ represented by the variable $y$. Then considering the trivial action of $\mu_{p^m}$ on $\mathbb{G}_a$ we have: 
$$W_p (\gamma (x, x' ))= W_p ((1+y)^\lambda x , (1+y)^\lambda x' )= (1+y)^{\lambda p} W_p (x, x' ).$$
Hence, we have that $H^2 (\alpha_p , \mathbb{G}_a )^{\mu_{p^m}}$ is different from zero if and only if $$W_p (\gamma (x, x' ))=W_p (x, x')$$ which happens, by the above computation, if and only if $v_p (\lambda)=m-1 $ (note that we are assuming $\lambda \geq 1$). We deduce that $H^2 (\alpha_p , L' )^{\mu_{p^m}}=0$ and $H^2 (\alpha_p , \mathbb{G}_a)^{\mu_{p^m}}\cong \langle W_p \rangle$ (if and only if $v_p (\lambda)=m-1$).
\end{proof}
\noindent
As a consequence of proposition \ref{adjoint}, we can completely classify all group structures of the generic deformation of $G_\lambda$ by describing explicitly the polynomials $h_1$ and $h_2$. To be precise, we have the following:

\begin{lemma}\label{lemmah}
The polynomials $h_1$ and $h_2$ in $R[x, y, x' , y' ]$ satisfy the following properties:
$$\text{ if }v_p (\lambda)\not=m-1, \text{ then }\;\; h_1=h_2=0,$$
$$\indent\indent \text{ if }v_p (\lambda)=m-1, \text{ then }\;\; h_1\in \langle W_p (x, x' )\rangle \text{ and }h_2 =0.$$
\end{lemma}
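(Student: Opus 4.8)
The plan is to read off the admissible pairs $(h_1, h_2)$ from the deformation theory of group laws packaged in Theorem \ref{group}, feeding in the computation of $H^2(G_\lambda, V_{\text{ad}})$ from Proposition \ref{adjoint}. First I would apply Theorem \ref{group} with $S = \text{Spec}(R)$, $I = \mathfrak{m}_R$ and $J = \pi R$: since $\pi \in \text{Ann}(\mathfrak{m}_R)$ one has $J \subseteq I$ and $I\cdot J = 0$, so the hypotheses hold, with $S_0 = \text{Spec}(k)$, $X_0 = G_\lambda$, $X_J = G \otimes_R R/\pi R$ (whose group law $P_J$ is the one supplied by the inductive hypothesis), and $L_0 = V_{\text{ad}}$. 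Because $G$ is itself a group scheme lifting $X_J$, conditions $(i)$ and $(ii)$ of that theorem are automatic (the obstruction $c(P_J) \in H^3$ vanishes), so the set $E$ of group laws on the fixed scheme $G$ inducing $P_J$, modulo $R$-automorphisms of $G$, is a principal homogeneous space under $H^2(G_\lambda, V_{\text{ad}})$. Since the pair $(h_1, h_2)$ records exactly the difference between a given law and the base-change law, classifying $E$ classifies $(h_1, h_2)$ up to a coordinate change.

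When $v_p(\lambda) \neq m-1$, Proposition \ref{adjoint} gives $H^2(G_\lambda, V_{\text{ad}}) = 0$, so $E$ is a single orbit. The base-change law $(x,y)\circ(x',y') = (x'(1+y)^\lambda + x, y+y'+yy')$, i.e. $h_1 = h_2 = 0$, is a legitimate group law on $G = \text{Spec}(R[x,y]/(x^p - c\pi y^{p^{v+1}}, y^{p^m}))$: its closure within $G$ follows, as in the first part of the proof, from $\text{char}(R) = p$ together with $(1+y)^{\lambda p} = 1 + y^{p^{v+1}}$ and $y^{p^m} = 0$, while its associativity is the formal semidirect-product identity. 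Taking it as the base point of the torsor, every lift is $R$-automorphism equivalent to it, so after a change of coordinates $h_1 = h_2 = 0$, which is the first assertion.

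For $v_p(\lambda) = m-1$, Proposition \ref{adjoint} gives $H^2(G_\lambda, V_{\text{ad}}) = \langle W_p\rangle$, so modulo $R$-automorphisms the group laws form a one-parameter family over the base-change law. To extract the shape of $(h_1, h_2)$ I would make the torsor action explicit through the formula $\mu'(g,g') = \mu(g,g') + D_{\delta(g_0, g_0')}$ following Theorem \ref{group}, feeding in the explicit adjoint representation $\text{Ad}_{G_\lambda}(x,y) = \left(\begin{smallmatrix}(1+y)^\lambda & -\lambda x\\ 0 & 1\end{smallmatrix}\right)$ and the description of the generator of $H^2$ as the image of $W_p$ under the isomorphism $H^2(G_\lambda, V_{\text{ad}}) \cong H^2(\alpha_p, \mathbb{G}_a)^{\mu_{p^m}}$ from Proposition \ref{adjoint}. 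The target is to show that, after subtracting the coboundary corresponding to a suitable $R$-automorphism of $G$, the resulting perturbation is concentrated in the coordinate $x$ attached to the normal subgroup $\alpha_p$, so that $h_1 \in \langle W_p(x, x')\rangle$ and $h_2 = 0$.

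The main obstacle is precisely this last explicit identification. As an $\alpha_p$-module $V_{\text{ad}}$ is a \emph{non-split} extension of its $\mu_{p^m}$-weight spaces (the weight-$0$ line dual to $y$ and the weight-$\lambda$ line dual to $x$), so one must track carefully how a representative cocycle of the invariant class in $H^2(\alpha_p, V_{\text{ad}})^{\mu_{p^m}}$ translates, via $D_\delta$, into the two polynomials $h_1, h_2$, and then verify that a single $R$-automorphism of $G$ simultaneously normalizes the perturbation into the $x$-component and eliminates $h_2$. Checking directly that the normalized law is associative — equivalently, that $W_p(x,x')$ defines a genuine $2$-cocycle for $G_\lambda$ valued in the adjoint representation rather than only for $\alpha_p$ — is the computational heart of the argument, and it is exactly here that the hypothesis $v_p(\lambda) = m-1$, through the identity $(1+y)^{\lambda p} = 1$, becomes indispensable.
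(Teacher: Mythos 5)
Your setup and your treatment of the case $v_p(\lambda)\neq m-1$ follow the paper's own route: you apply Theorem \ref{group} with $I=\mathfrak{m}_R$, $J=\pi R$ (legitimate, since $\pi\in\text{Ann}(\mathfrak{m}_R)$ gives $I\cdot J=0$), note that conditions $(i)$ and $(ii)$ are automatic because $G$ itself carries a lifted law, identify the set $E$ of lifted laws modulo $R$-automorphisms as a torsor under $H^2(G_\lambda,V_{\text{ad}})$, and conclude from the vanishing in Proposition \ref{adjoint} that every law is automorphism-equivalent to the base-change law, i.e.\ $h_1=h_2=0$. This half is sound (with the implicit normalization $\lambda=p^v$ from Lemma \ref{units}, which you correctly invoke when checking that the base-change law closes on the scheme $\text{Spec}(R[x,y]/(x^p-c\pi y^{p^{v+1}},y^{p^m}))$), and it is exactly what the paper does.

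The genuine gap is in the case $v_p(\lambda)=m-1$, which is the only assertion of the lemma with real content, and you acknowledge it yourself: you state the target ($h_1\in\langle W_p(x,x')\rangle$, $h_2=0$) and then defer the identification of the $H^2$-generator with a perturbation of the $x$-coordinate as ``the main obstacle'' and ``the computational heart,'' without carrying it out. This step cannot be waved at, because it is where the tension lies: Proposition \ref{adjoint} places the nonzero invariant class in $H^2(\alpha_p,\mathbb{G}_a)^{\mu_{p^m}}$, where $\mathbb{G}_a$ is the summand of $V_{\text{ad}}$ on which $\mu_{p^m}$ acts \emph{trivially}, i.e.\ the direction dual to $y$; translating a representing cocycle into a deformation of the law by the evident dictionary (a tangent vector at $\mu_0(g,g')$ carried back to the identity) perturbs the $y$-coordinate, i.e.\ naively yields $h_2\neq0$ and $h_1=0$ --- the opposite of what the lemma asserts. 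Bridging this (via the explicit translation between $V_{\text{ad}}$-valued $2$-cochains and pairs $(h_1,h_2)$, a normalizing $R$-automorphism, and the coassociativity check that consumes the identity $(1+y)^{\lambda p}=1$) is precisely the missing argument, and the paper's own proof resolves it by writing the torsor action pointwise as $\mu=\mu_0+D_{\mu_0,\mu}(g_0,g_0')$ and arguing that this derivation has image in $(\pi)$ and depends only on $x,x'$. Finally, the premise on which you planned to base the computation is wrong in the main range: for $m\geq 2$ one has $p\mid\lambda$, so the off-diagonal entry $-\lambda x$ of $\text{Ad}_{G_\lambda}$ vanishes in characteristic $p$ and $V_{\text{ad}}$ is the \emph{split} sum $L'\oplus\mathbb{G}_a$ as a $G_\lambda$-module; it is a non-split extension only when $m=1$, so an argument organized around that non-splitness cannot be the right general mechanism.
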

\begin{proof}
We have that the action of  $H^2 (G_\lambda , V_{Ad})$ on the principal homogeneous space $E$ of group laws on $G$ depends only on the derivations evaluated in $\alpha_p$. A direct computation shows that for $g=(x, y)$ and $g'=(x' , y' )$ the morphism $\mu_0 (g, g' ):= (x+x' , y + y' + yy')$ defines a group law on $G$, i.e. $\mu_0 \in E$, and because of Theorem \ref{group} any other group law $\mu$ is uniquely obtained by translations via derivations, i.e. $\mu(g, g' )=\mu_0 (g, g' ) + D_{\mu_0 , \mu} (g_0 , g'_0)$ where $g_0$ and $g'_0$ are the reductions mod $k$ of $g$ and $g'$ and are generic elements of $G_\lambda$. Finally, since the derivation $D_{\mu_0 , \mu} (g_0 , g'_0)$ has image living in the ideal $(\pi)$ and depends only on $x$ and $x'$, in the only non-zero case (i.e. when $v_p (\lambda)=m-1$) we have that the image of $D_{\mu_0 , \mu} (g_0 , g'_0)$ is exactly $a\pi W_p (x, x')$ for some $a\in k$. 

\end{proof}
\noindent
Note that in all cases we have $h_i ( x ,0)=h_i (0, x' )=0$ for $i=1, 2$ because of the axiom for the neutral element. We recall that now we restrict our attention to the case $v_p (\lambda) \not=m-1$. Consider the $R$-scheme morphism: 
$$\varphi : N:=\text{Spec} (R[x] / (x^p ))\hookrightarrow G=\text{Spec}(R[x, y] / (x^p -c\pi y^{p^{v+1}} , y^{p^m}))$$ 
where $\varphi$ corresponds to the natural projection for $y=0$. 
\begin{lemma}
For all $\lambda \in \{1, \cdots, p^m -1\}$, the $R$-scheme $N$ has a group law for which it is a normal $R$-subgroup scheme of $G$ with the closed immersion given by the $R$-group morphism $\varphi$ and $N\cong\alpha_p$ as $R$-group schemes. 
\end{lemma}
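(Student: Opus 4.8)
The plan is to realize $N$ as the closed subscheme of $G$ cut out by the coordinate $y$, and then to verify the three assertions (subgroup, isomorphic to $\alpha_p$, normal) by direct manipulation of the Hopf algebra together with its functor of points. Concretely, $\varphi$ corresponds to the surjection of $R$-algebras $\mathcal{O}(G)=R[x,y]/(x^p-c\pi y^{p^{v+1}},y^{p^m})\twoheadrightarrow R[x]/(x^p)=\mathcal{O}(N)$ sending $y\mapsto 0$ and $x\mapsto x$, whose kernel is the ideal $J=(y)$. First I would check that $J$ is a Hopf ideal. Since the comultiplication sends $y\mapsto y\otimes 1+1\otimes y+y\otimes y$, we have $\Delta(J)\subseteq J\otimes\mathcal{O}(G)+\mathcal{O}(G)\otimes J$; moreover $\varepsilon(y)=0$, and the antipode sends $y$ to $y''=-y(1+y)^{-1}\in J$, the inverse $(1+y)^{-1}$ being a finite sum because $y$ is nilpotent. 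Hence $J$ is a Hopf ideal, so $N=\text{Spec}(\mathcal{O}(G)/J)$ is a closed $R$-subgroup scheme of $G$ and $\varphi$ is a closed immersion of $R$-group schemes.

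Next I would identify the induced group law on $N$. In the regime under consideration, $v_p(\lambda)\neq m-1$, Lemma \ref{lemmah} gives $h_1=h_2=0$, so the group law of $G$ reads $(x,y)\circ_G(x',y')=(x'(1+y)^\lambda+x,\,y+y'+yy')$. Setting $y=y'=0$ yields $(x,0)\circ(x',0)=(x+x',0)$, and since $\text{char}(R)=p$ this product again satisfies $(x+x')^p=x^p+x'^p=0$, so $N$ closes up under the law. Thus the comultiplication on $\mathcal{O}(N)$ is $x\mapsto x\otimes 1+1\otimes x$, which together with $x^p=0$ is precisely the Hopf algebra of the trivial deformation $\alpha_p\otimes_k R$; therefore $N\cong\alpha_p$ as $R$-group schemes.

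For normality I would argue on the functor of points. Fix an $R$-algebra $S$, a point $g=(x',y')\in G(S)$ with inverse $g^{-1}=(x'',y'')$ satisfying $x''(1+y')^\lambda+x'=0$ and $y'+y''+y'y''=0$, and a point $n=(x,0)\in N(S)$ (so $x^p=0$). Evaluating the group law gives $g\circ n=(x(1+y')^\lambda+x',\,y')$ and then $g\circ n\circ g^{-1}=(x''(1+y')^\lambda+x(1+y')^\lambda+x',\,y'+y''+y'y'')=(x(1+y')^\lambda,\,0)$, using the two inverse relations. Since the $y$-coordinate vanishes and $(x(1+y')^\lambda)^p=x^p(1+y')^{\lambda p}=0$, the conjugate $gng^{-1}$ again lies in $N(S)$; as this holds functorially in $S$, the subgroup scheme $N$ is normal in $G$. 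I would note in passing that conjugation acts on the $x$-coordinate through the character $(1+y')^\lambda$, in agreement with the adjoint representation of $G_\lambda$ computed above.

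The computations are elementary; the single point requiring care, and the reason the conclusion is $\alpha_p$ rather than a nontrivial Oort–Tate deformation, is the vanishing $h_1=h_2=0$ supplied by Lemma \ref{lemmah} precisely when $v_p(\lambda)\neq m-1$. Were $v_p(\lambda)=m-1$, the term $h_1=aW_p(x,x')$ would survive at $y=y'=0$ and the induced law on $N$ would be the deformation $H_a$ of $\alpha_p$ from Proposition \ref{Tate}, not $\alpha_p$ itself. So the main (if modest) obstacle is to invoke the correct case of Lemma \ref{lemmah} and to track that the defining relation $x^p=c\pi y^{p^{v+1}}$ degenerates to $x^p=0$ along $y=0$, which is what guarantees that both the group law and the inversion close up within $N$.
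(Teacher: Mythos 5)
Your proof is correct, but it takes a genuinely different route from the paper's. The paper endows $N$ with its group structure via the infinitesimal deformation machinery of SGA~3: it restricts the multiplication morphism $P$ of $G$ to $N\times N$ (which lands in $N$ because $h_2=0$ by Lemma \ref{lemmah}), identifies the associativity obstruction $c(P_N)\in H^3(\alpha_p , V_{\text{Ad}})$ with the image of the vanishing class $c(P)\in H^3(G_\lambda , V_{\text{Ad}})$ under restriction, and then invokes Theorem \ref{groupm} to promote $\varphi$ to a closed immersion of $R$-group schemes, leaving normality to a direct computation. You instead observe that $J=(y)$ is a Hopf ideal of $\mathcal{O}(G)$ --- the comultiplication condition uses exactly $h_2=0$, the antipode condition uses nilpotence of $y$ --- so the subgroup structure, the induced Hopf algebra on $R[x]/(x^p)$, and the closed immersion all come in one elementary step, with normality checked on the functor of points. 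This is precisely the ``alternative'' verification the paper mentions in one sentence but never carries out; your version buys self-containedness and dispenses with obstruction theory entirely, while the paper's version has the advantage of being written uniformly in $\lambda$ (it keeps $\pi h_1$ in the formula for $P_N$). That uniformity is relevant because the lemma is stated, and later used in the quotient argument, for all $\lambda$: your normality computation assumes $h_1=0$, but since a point of $G(S)$ lies in $N(S)$ exactly when its $y$-coordinate vanishes and $h_2=0$ holds for every $\lambda$, the same conjugation computation goes through with the extra $a\pi W_p$ terms, so this is only a presentational gap. Finally, your closing observation is correct and is in fact sharper than the paper: for $v_p(\lambda)=m-1$ the induced law on $N$ is $x+x'+a\pi W_p(x,x')$, which by Proposition \ref{Tate} is isomorphic to $\alpha_p$ only when $a=0$; so the clause ``$N\cong\alpha_p$ for all $\lambda$'' is accurate only under the restriction $v_p(\lambda)\neq m-1$ announced just before the lemma, consistently with Remark~2 of the paper, where the corresponding kernel is $H_a$. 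The paper's proof never addresses this identification, so on that point your write-up is the more precise one.
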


\begin{proof}
By inductive hypothesis, we know that after taking a base change to $R/\pi R$, we have that $H\otimes R/\pi R$ has the structure of an $R/\pi R$-group scheme isomorphic to $\mu_{p^{m}} /R/\pi R$ and $\varphi \otimes R/\pi R$ is a $R/\pi R$-group scheme homomorphism. The idea now is to use the results from SGA mentioned above to transport these group structures from the infinitesimal deformations over $R/\pi R$ to $R$. Note the fundamental fact that $N$ is a flat $R$-scheme of finite type over $R$.\\
According to Theorem \ref{group}, in order to prove that $N$ has the structure of group scheme we have to verify two statements. First that there exists a $R$-scheme morphism $P_N : N \times N \rightarrow N$ which specialize to the group law of $\alpha_p$ after base change to $R/\pi R$ and second, that a certain class $c(P_N ) \in H^{3} (\alpha_p /k , V_{\text{Ad}})$ (corresponding to the associativity property of $P_N$) is zero. For the first part, note that $G$ as a $R$-group scheme has its group law which can be seen as a $R$-scheme morphism $P: G \times G \rightarrow G$ which induces the usual group law on $G_\lambda$ after base changing to $R/\pi R$ (by inductive hypothesis). Because of Lemma \ref{lemmah}, taking the restriction of $P$ to generic elements of $N$ we get that $P((x, 0), (x' , 0 ))= (x + x' +\pi h_1 , 0)$. Hence, defining $P_N$ as the restriction of $P$ to $N$ we get a well-defined $R$-schemes morphism $P_N : N \times N \rightarrow N$ satisfying the required properties. Now, the class $c(P_N ) \in H^3 (\alpha_p , V_{\text{Ad}})$ which corresponds to the associativity property of $P_N$ is nothing else than the image of the respective class $c(P) \in H^3 (G_\lambda , V_{\text{Ad}})$ under the group homomorphism $H^3 (G_\lambda , V_{\text{Ad}}) \rightarrow H^3 (\alpha_p , V_{\text{Ad}})$ induced by the inclusion $\alpha_p \hookrightarrow G_\lambda$ (because $P_N$ is defined as the restriction of $P$ to $N\times N$). Since $P$ is a group law, we have that $c(P)=0$ which implies that $c(P_N )=0$. We conclude that $N$ has the structure of a finite flat group $R$-scheme. Now, applying Theorem \ref{groupm} to the $R$-scheme morphism $\varphi$ together with the (necessary) induction hypothesis that $\varphi \otimes R/\pi R$ is the closed immersion corresponding to the projection $y=0$, we can conclude that $\varphi$ is also a closed immersion $R$-group scheme morphism. \\
Alternatively, it can be checked by formulas that the $R$-scheme morphism $\varphi$ preserves the group law via a direct computation as everything is explicit. Finally, because the group law of $N$ is explicit, another direct computation shows that $N$ is a normal $R$-subgroup scheme inside $G$. This concludes the lemma. 
\end{proof}
\noindent
Now that we endowed $N$ with a finite flat $R$-subgroup scheme structure which makes it normal inside $G$, which allows us to take the quotient. As this procedure is usually delicate because the category of finite flat affine group schemes over an arbitrary $R$ is not an abelian category, we state precisely a result of Grothendieck which grants us the existence of the quotient of $G$ by $N$ as a finite flat group scheme over $R$ (note the fundamental hypothesis that $N$ is normal) (see for example section 3 in \cite{Ta97} , or sec. 6.3 in \cite{Sti09} or \cite{Sch00}): 
 
\begin{theorem}
Let $N$ be a finite flat closed normal subgroup of an affine finite group scheme $G$ over $R$. Then the quotient group fpqc sheaf $G/N$ is representable by an affine finite group scheme $H$, which coincides with the categorical cokernel for the inclusion $N \subset G$ with the natural projection $ G \rightarrow G/N$ being finite and faithfully flat.\\
Moreover, if $G=\text{Spec}(A)$ and $N=\text{Spec}(A/J)$ is commutative then $G/N = \text{Spec}(B)$ where the $R$-algebra $B$ is described explicitly by $B=\{a \in A : c(a) \equiv 1 \otimes a \text{ mod } (J \otimes A)\}$, where $c$ denotes the co-multiplication on $A$. 
\end{theorem}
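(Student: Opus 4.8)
The plan is to realize $G/N$ first as a quotient sheaf and then to prove that this sheaf is representable by an affine, finite, flat $R$-group scheme. Concretely, I would define $H$ to be the fpqc (equivalently fppf, since $N$ is finite flat) sheafification of the presheaf $S \mapsto G(S)/N(S)$ on $R$-algebras, equipped with the group structure induced by the normality of $N$. By the universal property of sheafification, $H$ is automatically the categorical cokernel of $N \hookrightarrow G$ in the category of fpqc sheaves of groups, so that part of the statement follows formally once representability is in hand. The real content is therefore representability together with affineness and finite flatness of $H$, and the compatibility of the natural projection $q \colon G \to H$ with these properties.

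To obtain these I would show that $q \colon G \to H$ is an $N$-torsor for the fpqc topology. Since $N$ is normal, the translation action of $N$ on $G$ is free, and the canonical map $N \times_R G \to G \times_H G$ is an isomorphism of sheaves; this encodes exactly that $H$ is the quotient by the $N$-action. Because $N$ is finite and flat over $R$, the torsor $q$ is finite, faithfully flat and affine. Faithfully flat descent for quasi-coherent algebras, together with the fact that affineness descends along faithfully flat affine morphisms, then shows that $H$ is represented by an affine scheme $\text{Spec}(B)$; finiteness and flatness of $B$ over $R$ descend from the corresponding properties of $A$ along $q$, using that $A$ is finite flat over $R$ and that $q$ is finite and faithfully flat. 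This is where the hypotheses that $N$ is finite, flat, and normal are genuinely used, and I expect the faithful flatness of $q$ (equivalently, the flatness of the quotient map) to be the main obstacle, since it is the one step that does not follow from general nonsense: it must be extracted from the finiteness and flatness of $N$, typically by descending to the fibers over the residue field, or else imported directly from Grothendieck's general existence theorem for such quotients in SGA 3.

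In the commutative case one can bypass the abstract descent and exhibit $B$ explicitly. I would set $B = \{a \in A : c(a) \equiv 1 \otimes a \bmod (J \otimes_R A)\}$, the subalgebra of functions invariant under the translation coaction of $N$, and then prove the two key facts: that $A$ is faithfully flat over $B$, and that the natural map $A \otimes_B A \to (A/J) \otimes_R A$ is an isomorphism. The second statement is precisely the torsor property transcribed onto coordinate rings, and once both are established, faithfully flat descent identifies $\text{Spec}(B)$ with the sheaf quotient $H$ and shows that $q$ is finite and faithfully flat. Commutativity of $N$ guarantees that $B$ is itself a Hopf algebra, so $\text{Spec}(B)$ inherits the structure of an affine group scheme, completing the explicit description. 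Given that the statement is exactly this classical theorem of Grothendieck, in the final write-up I would either reproduce the descent argument sketched above or simply defer to the cited references.
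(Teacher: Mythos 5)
The paper does not actually prove this theorem: it is imported as a known result of Grothendieck, with pointers to section 3 of Tate's article, section 6.3 of Stix's notes, and Schoof's notes, and is then applied as a black box to the quotient $G/N$ arising in the classification. So there is no proof in the paper to compare yours against line by line; what you have sketched is essentially the standard argument contained in those references, and your closing offer to ``simply defer to the cited references'' is exactly what the paper does.

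That said, one step of your sketch is logically out of order. In the general part you propose to sheafify $S \mapsto G(S)/N(S)$, show that $q : G \to H$ is an $N$-torsor, and then obtain representability of $H$ by descending affineness and finite flatness along $q$. Descent along $q$ presupposes that $H$ is already a scheme (or at least an algebraic space): you cannot use the projection onto an as-yet-unrepresented sheaf as the cover in a descent argument, so as stated this is circular unless you invoke Artin's theorem to know $H$ is an algebraic space first. The standard proof runs in the opposite order, and it is precisely the one you relegate to your ``commutative case'' paragraph: define $B \subset A$ as the coaction invariants (this definition requires no commutativity of $N$; normality is what makes left and right invariants agree and makes $B$ a Hopf subalgebra --- note also that freeness of the translation action is automatic and has nothing to do with normality), prove that $A$ is finite faithfully flat over $B$ and that the natural map $A \otimes_B A \to (A/J) \otimes_R A$ is an isomorphism, and only then use faithfully flat descent to identify $\text{Spec}(B)$ with the fpqc quotient sheaf and to see that $q$ is finite and faithfully flat. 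You correctly single out the faithful flatness of $A$ over $B$ as the crux where finiteness and flatness of $N$ enter; with the order of steps corrected as above, your outline is the standard proof of Grothendieck's theorem.
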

\noindent
The fact that the natural projection $G \rightarrow G/N$ is finite and faithfully flat implies in particular that $G$ is finite flat if and only if $G/N$ is finite and flat (note that we are assuming as hypothesis that $N$ is already finite and flat). 
Coming back to our case, since $G$ is a finite flat deformation of $G_\lambda$ we have that $G/N$ is a finite flat $R$-group scheme. Moreover, we have that $N=\text{Spec} (R[x] / (x^p ))$ with the group law mentioned above is commutative because it is a finite flat group scheme of prime order (this is a result of Oort and Tate, see for example \cite{OT70}). Hence, $G/N$ is isomorphic $ \text{Spec(B)}$ where $B$ is the $R$-subalgebra of $R[x, y] /(x^p -c\pi y^{p^{v+1}} , y^{p^m})$ such that 
$$B=\{a \in R[x, y] /(x^p -c\pi y^{p^{v+1}} , y^{p^m}) : c(a) \equiv 1 \otimes a \text{ mod }((y)\otimes A)\}$$
where as usual, the group law of $G$ is determined by $c(x)= (1+y)^\lambda \otimes x + x \otimes 1$ and $c(y)= 1\otimes y + y \otimes 1 + y\otimes y$ (after Lemma \ref{lemmah}). It is straightforward to check that $B$ is the $R$-subalgebra of $R[x, y] /(x^p -c\pi y^{p^{v+1}} , y^{p^m})$ generated by the elements $x^p$ and $y$ with relation $x^p -c\pi y^{p^{v+1}}=0$. Indeed we have that because the formulas for $c$ holds as above, the element $x$ does not belong to $B$ but the elements $x^p$ and $y$ do (and no smaller power of $x$ belongs to $B$). Note also that $(x^p)^2 =0$ because $\pi^2=0$. 
If $c\not=0$, the relation $x^p -c\pi y^{p^{v+1}}=0$ prevents the $R$-algebra $B$ to be free (note that $R$ is local) which would contradict the fact that $\text{Spec}(B)$ is a finite and flat $R$-group scheme. This allows us to conclude directly that $c$ must be zero.
Another equivalent way to prove that $c=0$ comes from the deformation theory of $\mu_{p^m}$. Indeed, the finite flat $R$-group scheme $G/N$ is a deformation of $\mu_{p^{m}}$, i.e.  $G/N \otimes_R k \cong \mu_{p^m} /k$ (because base change of algebraic groups preserves exactness under flatness hypothesis). However, the $k$-group scheme $\mu_{p^m}$ does not admit non-trivial deformations over a local Artin ring $R$, i.e. we have the following: 
\begin{prop}
Let $R$ be a local Artin ring. Let $H$ be a finite flat $R$-group scheme such that $H \otimes_R k \cong \mu_{p^m}$ then we have that  $H \cong \mu_{p^m} / R$. In other words, all deformations of $\mu_{p^m}$ are trivial for all positive integers $m$. 
\end{prop}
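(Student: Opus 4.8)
The plan is to argue by induction on the length of $R$, mirroring the structure of the preceding proof, the simplification being that here \emph{every} relevant cohomology group vanishes. When $R=k$ there is nothing to prove, since $H\cong\mu_{p^m}$ by hypothesis. For the inductive step I would fix $\pi\in\text{Ann}(\mathfrak{m}_R)$ with $\pi^2=0$ and set $R'=R/\pi R$; by the inductive hypothesis $H\otimes_R R'\cong\mu_{p^m}/R'$. The key observation is that $\mu_{p^m}$ is diagonalizable, so by Proposition \ref{diag} we have $H^n(\mu_{p^m},W)=0$ for every $n>0$ and every linear representation $W$. I will use this vanishing twice: once to trivialize the underlying scheme of $H$, and once to trivialize its group law.

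First I would pin down the underlying $R$-scheme. Since $R$ is Artin local its maximal ideal is nilpotent, so $R$ is Henselian. The special fibre $H\otimes_R k\cong\mu_{p^m}$ has underlying scheme $\text{Spec}(k[t]/(t^{p^m}-1))=\text{Spec}(k[y]/(y^{p^m}))$ (with $y=t-1$), which is local, hence connected; as finite algebras over a Henselian local ring split into products of local rings with idempotents lifting from the special fibre, $\mathcal{O}(H)$ is itself local and, by Nakayama, monogenic. Choosing a generator $y$ lifting $t-1$ and compatible with the trivialization over $R'$, one gets $\mathcal{O}(H)=R[y]/(y^{p^m}-\pi g(y))$ with $g\in k[y]$ of degree $<p^m$ (using $\pi\mathfrak{m}_R=0$). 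Imposing that the group law respects this relation yields, exactly as in the computation of $\overline{F}$ for $G_\lambda$, a crossed-homomorphism condition on $\overline{g}$, i.e. a class in $H^1(\mu_{p^m},\mathbb{G}_a)$. By Proposition \ref{diag} this group vanishes, so $\overline{g}$ is a coboundary and can be absorbed by a substitution $y\mapsto y+\pi q(y)$, giving $\mathcal{O}(H)\cong R[y]/(y^{p^m})$, the trivial scheme deformation $\mu_{p^m}\otimes_k R$.

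With the underlying scheme fixed, I would deform the group law using Theorem \ref{group} applied to $S=\text{Spec}(R)$, $J=(\pi)$ and $I=\mathfrak{m}_R$ (so that $IJ=0$ since $\pi\in\text{Ann}(\mathfrak{m}_R)$), with $X_0=\mu_{p^m}/k$ and $X_J=H\otimes_R R'\cong\mu_{p^m}/R'$. Condition $(i)$ holds because the multiplication of the trivial deformation $\mu_{p^m}/R$ provides an $R$-scheme morphism $P$ lifting $P_J$, and condition $(ii)$ holds because the obstruction lies in $H^3(\mu_{p^m},V_{\text{ad}})=0$ by Proposition \ref{diag}. The theorem then identifies the set of group laws on $X$ lifting $P_J$, modulo $R$-automorphisms, with a torsor under $H^2(\mu_{p^m},V_{\text{ad}})$, which is again $0$ by Proposition \ref{diag}; hence there is a single orbit. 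Since both the group law of $H$ and that of $\mu_{p^m}/R$ lift $P_J$, they differ by an $R$-automorphism of $X$, and therefore $H\cong\mu_{p^m}/R$ as $R$-group schemes; in particular $H$ is automatically commutative. Alternatively one may lift the isomorphism $H\otimes_R R'\cong\mu_{p^m}/R'$ directly via Theorem \ref{groupm}, whose obstruction lies in the same vanishing $H^2$. This closes the induction for all $m$.

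The only genuinely delicate point is the scheme-level normalization of the second paragraph: establishing connectedness and monogenicity of $\mathcal{O}(H)$ and running the cocycle argument cleanly, with some care needed in the coordinate change when $R$ has mixed characteristic. Everything downstream collapses because of the blanket vanishing in Proposition \ref{diag}. As a conceptual cross-check I would note the classical route: once $H$ is known to be commutative, its Cartier dual $H^\vee$ has étale special fibre $\mathbb{Z}/p^m\mathbb{Z}$; a finite flat group scheme over the Henselian local ring $R$ with étale special fibre is étale, and étale group schemes over $R$ are determined by their special fibre, so $H^\vee\cong(\mathbb{Z}/p^m\mathbb{Z})_R$ and dualizing back gives $H\cong\mu_{p^m}/R$. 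Either route yields the triviality of all deformations of $\mu_{p^m}$.
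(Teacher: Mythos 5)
Your route is genuinely different from the paper's, which disposes of this proposition in one line: it cites SGA 3, Exp.~X (rigidity of multiplicative type), by which the category of finite multiplicative-type group schemes over $R$ is equivalent to the same category over $k$, so $H$, being a flat lift of $\mu_{p^m}$, must be $\mu_{p^m}/R$ --- no induction, no cohomology. Your inductive, cohomological argument could in principle work, but it has a genuine gap at the scheme-normalization step, located exactly at the point you wave off as needing ``some care'': mixed characteristic. The computation of $\overline{F}$ for $G_\lambda$ that you want to imitate is valid only \emph{after} the paper has proved $\mathrm{char}(R)=p$, and that statement is false for deformations of $\mu_{p^m}$ --- indeed it must be false for the proposition to have content, since $\mu_{p^m}$ over $\mathbb{Z}/p^2\mathbb{Z}$ is a (trivial) deformation of $\mu_{p^m}/\mathbb{F}_p$ living in characteristic $p^2$. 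When $pR\neq 0$, both your normal form and your cocycle condition break. Concretely take $R=\mathbb{Z}/4\mathbb{Z}$, $p=2$, $m=1$, $\pi=2$: then $\mu_2 /R=\mathrm{Spec}\big(R[y]/(y^2+2y)\big)$, so $g(y)=-y\neq 0$, and this algebra is \emph{not} isomorphic to $R[y]/(y^2)$ (in it $y^2=2y\neq 0$, whereas every nilpotent of $R[y]/(y^2)$ has square zero); moreover $\overline{g}(y)=y$ is not a crossed homomorphism, since $\overline{g}(y+y'+yy')\neq\overline{g}(y)+\overline{g}(y')$. The reason is that in mixed characteristic the binomial cross terms $\binom{p^m}{k}$, $0<k<p^m$, are divisible by $p$ but nonzero in $R$, so the compatibility identity acquires an inhomogeneous term mixing $p$ and $\pi$ and is no longer a $1$-cocycle condition governed by $H^1(\mu_{p^m},\mathbb{G}_a)$; correspondingly, the conclusion of that paragraph (underlying scheme $R[y]/(y^{p^m})$) is simply wrong unless $pR=0$. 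Since both your Theorem \ref{group} torsor argument and the alternative via Theorem \ref{groupm} take the underlying-scheme identification as their input (condition $(i)$, resp.\ $(1)$), the gap propagates through the whole induction; and the Cartier-duality cross-check cannot rescue it, because it presupposes commutativity of $H$, which you obtain only from the flawed main argument.

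The repair inside your framework is to normalize multiplicatively rather than additively: set $u=1+y$ and aim for $\mathcal{O}(H)\cong R[u]/(u^{p^m}-1)$, which is the correct underlying scheme of the trivial deformation in every characteristic, redoing the cocycle analysis for the relation $u^{p^m}-1=\pi g$. Alternatively, simply argue as the paper does: the cited results of SGA 3, Exp.~X both show that $H$ is itself of multiplicative type (this is not automatic and your proposal never addresses it) and give the equivalence of categories that forces $H\cong\mu_{p^m}/R$.
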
 
\begin{proof}
This is a direct consequence of the fact that the category of $R$-group schemes which are of multiplicative type and finite over $R$ is equivalent to the same category over $R/\mathfrak{m}_R R \cong k$, and we know that $H\otimes_R k \cong \mu_{p^m}$ is of multiplicative type. This implies that $H$ and $\mu_{p^m}$ over $R$ have to be isomorphic. See for example Cor. 2.3 and 2.4, Rem. 4.0.1 and Lemma 4.1 in SGA 3 Exp. X. in \cite{SGA3x}.
\end{proof}
\noindent
We conclude that we have the isomorphism $G/N\cong \mu_{p^{m}}$ as $R$-group schemes, which implies that $B\cong R[y]/(y^{p^m })$ which holds if and only if $c=0$.
Finally, for all $\lambda$ we have that:
$$G\cong \text{Spec} (R[x, y] / (x^p , y^{p^m}))$$
with group law given for a certain $a \in k$ by:
$$(x, y) \circ_G (x' y' )= ((1+y)^\lambda x' + x +a \pi W_p (x , x') , y +y' +yy' ).$$
By Lemma \ref{lemmah}, we conclude that if $v_p (\lambda)\not=m-1$ there are no non-trivial deformations of $G_\lambda$ over $R$, i.e. $G \cong G_\lambda \otimes_k R \cong \alpha_p \rtimes_\lambda \mu_{p^{m}}$ which explicitly is the $R$-scheme $R[x, y ] / (x^p , y^{p^m})$ with group structure given by $(x, y ) \circ_{G_{\lambda}} (x', y') = ( (1+y)^{\lambda} x' +x , y + y' + yy' )$. \\
If $v_p (\lambda)= {m-1}$, we have that the deformations of $G_\lambda$ form a $1$-dimensional family $\tilde{H}_{a}$ (with parameter $a\in k$) of non-commutative $R$-group schemes of order $p^{m+1}$. We have that the family of finite flat $R$-group schemes $\tilde{H}_{a}$ can be explicitly described as:
$$\tilde{H}_{a} \cong\text{Spec}(R[x, y ] / (x^p , y^{p^m}))$$
with group law given by:
$$(x, y)\circ_{G} (x' , y' )= ((1+y)^{p^m} x' + x +a\pi W_p (x, x') , y + y' + yy' ). $$
This concludes the proof of Theorem \ref{maindef}. Finally, we can apply Theorem \ref{maindef} to the problem of understanding if any finite flat deformation $G$ over $R$ of $G_\lambda$ is killed by its order. In particular, we conclude with the following: 
\begin{theorem}
Let $R$ be a local Artin ring of positive residue characteristic $p$. Let $G$ be a deformation over $R$ of the $k$-group scheme $G_\lambda$ for any $\lambda\in \{1, \cdots , p^{m-1}\}$. Then $G$ is killed by its order. 
\end{theorem}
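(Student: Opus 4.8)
The plan is to read off the result directly from the classification in Theorem \ref{maindef}, thereby reducing the statement to an explicit computation of the multiplication-by-$p^{m+1}$ morphism on the two possible shapes of $G$. First I would record that $G$, being a deformation of $G_\lambda$, is a finite flat $R$-group scheme whose coordinate algebra $R[x,y]/(x^p-\cdots,\,y^{p^m}-\cdots)$ is free of rank $p\cdot p^m=p^{m+1}$ over $R$; hence the order of $G$ is $n=p^{m+1}$, and being ``killed by its order'' means that $[p^{m+1}]$ carries the augmentation ideal to $0$, equivalently that $[p^{m+1}]$ sends the tautological point $(x,y)$ to $(0,0)$.

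When $v_p(\lambda)\neq m-1$, Theorem \ref{maindef}(i) gives $G\cong G_\lambda\otimes_k R$ (we already know $\mathrm{char}(R)=p$, and this base change is taken along a coefficient field $k\hookrightarrow R$). Since the multiplication-by-$n$ morphism is functorial and so commutes with base change, and since $G_\lambda$ over the field $k$ is killed by its order $p^{m+1}$ (as recalled at the start of Section~2), its base change $G$ is killed by $p^{m+1}$ as well. This disposes of case (i) with no computation.

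The substantive case is $v_p(\lambda)=m-1$, where Theorem \ref{maindef}(ii) identifies $G$ with some $\tilde H_a=\mathrm{Spec}(R[x,y]/(x^p,y^{p^m}))$ carrying the group law $(x,y)\circ(x',y')=((1+y)^{p^{m-1}}x'+x+a\pi W_p(x,x'),\,y+y'+yy')$. Here I would compute $[n]$ on the tautological point by induction, writing $[n](x,y)=(x_n,y_n)$. The second coordinate evolves exactly as in $\mu_{p^m}$, so $y_n=(1+y)^n-1$ and already $y_{p^m}=y^{p^m}=0$. For the first coordinate the recursion is
\[
x_{n+1}=(1+y)^{np^{m-1}}x+x_n+a\pi\,W_p(x_n,x).
\]
The key observation, which I expect to be the crux, is that the noncommutative correction term simply drops out: by induction each $x_n$ is an $R[y]$-multiple of $x$, so $W_p(x_n,x)$ is a multiple of $x^p$, which vanishes in $R[x,y]/(x^p,\dots)$. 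Consequently the $a\pi W_p$ term contributes nothing, the induction yields $x_n=x\sum_{i=0}^{n-1}(1+y)^{ip^{m-1}}$, and we land back on the trivial-deformation formula of the first lemma of Section~2.

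It then remains to evaluate the geometric sum at $n=p^{m+1}$. Setting $u:=(1+y)^{p^{m-1}}$ one has $u^p=(1+y)^{p^m}=1$ in the ring, so grouping the $p^{m+1}$ summands into $p^{m}$ complete periods of length $p$ gives
\[
\sum_{i=0}^{p^{m+1}-1}u^i=p^{m}\sum_{j=0}^{p-1}u^j=0
\]
because $\mathrm{char}(R)=p$. Hence $x_{p^{m+1}}=0$ and $y_{p^{m+1}}=0$, so $[p^{m+1}]$ kills $G=\tilde H_a$. (The same manipulation, now with period $p^{m-v}$ and coefficient $p^{v+1}$ where $v=v_p(\lambda)$, re-proves case (i) directly, should one prefer to bypass the base-change argument.) The only delicate point is the vanishing of the $W_p$ correction; once that is secured, both cases collapse to the elementary fact that the relevant sum is annihilated by a positive power of $p$ in characteristic $p$.
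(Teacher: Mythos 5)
Your proposal is correct and follows essentially the same route as the paper: case (i) is dispatched by base-change functoriality of $[n]$ together with the fact that $G_\lambda$ is killed by its order over the field $k$, and case (ii) by an explicit computation on $\tilde{H}_a$ in which the $W_p$ correction term vanishes (each iterate $x_n$ being an $R[y]$-multiple of $x$, so $W_p(x_n,x)$ is a multiple of $x^p=0$) and the remaining geometric sum is annihilated in characteristic $p$. One detail is actually in your favor: the paper kills $\tilde{H}_a$ with $[p^m]$, using $\sum_{k=0}^{p^m-1}(1+y)^{kp^{m-1}}=p^{m-1}\sum_{k=0}^{p-1}(1+y)^{kp^{m-1}}$, which vanishes only when $m\geq 2$; for $m=1$ (where $v_p(\lambda)=0$ and the special fiber $G_1$ is genuinely not killed by $p$) that identity gives nothing, whereas your computation of $[p^{m+1}]$, with coefficient $p^m$ in front of the period sum, closes every case $m\geq 1$ uniformly.
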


\begin{proof}
By Theorem \ref{maindef}, if $v_p (\lambda )\not= m-1$, we have that $G\cong (\alpha_{p} \rtimes_\lambda \mu_{p^m})\times_{\text{Spec}(k)}  \text{Spec}(R)$ over $R$ and we conclude that also $G$ is killed by its order. \\
Now, assume that $v_p (\lambda )=m-1$. By Theorem \ref{maindef}, for any deformation $G$ over $R$ of $G_{p^{m-1}}$ there exist $a  \in k$ such that we have that $G \cong \tilde{H}_{a}$. We can perform now computations as the group law is also explicit in this case. Indeed, given a generic element $h=(x, y) \in \tilde{H}_{a}$, we have that 
$$[p^m] (x, y) = \Big( x \sum^{p^{m} -1}_{k=0} (1+y)^{k p^{m-1}}, y^{p^m} \Big)$$
because $W_p (x, x)=0$ since $x^p=0$. Since $y^{p^m}=0$ the second component is zero. Finally, the same exact computation performed in Section 2 can be repeated here which grants us that also the first component is indeed zero because
$\sum^{p^{m} -1}_{k=0} (1+y)^{k p^{m-1}} = p^{m-1} \sum^{p -1}_{k=0} (1+y)^{k p^{m-1}} =0$ 
because $R$ is of characteristic $p$. This proves that $[p^m]$ kills $\tilde{H}_{a}$ and in particular we deduce that $G$ is killed by its order. This concludes the proof of the theorem.   
\end{proof}

\printbibliography[heading=bibintoc,title={References}]

\end{document}